\newtheorem{theorem}{Theorem}[section]
\newtheorem{lemma}[theorem]{Lemma}
\newtheorem{corollary}[theorem]{Corollary}
\newtheorem{proposition}[theorem]{Proposition}
\newtheorem{question}[theorem]{Question}
\theoremstyle{definition}
\newtheorem{definition}[theorem]{Definition}
\theoremstyle{remark}
\newtheorem{remark}[theorem]{Remark}
\newtheorem{example}[theorem]{Example}
\numberwithin{equation}{section}
\def\cf{\mathop{\rm cf}\nolimits}
\def\cl{\mathop{\rm cl}\nolimits}
\def\ho{\mathop{\rm Hom}\nolimits}
\def\max{\mathop{\rm max}\nolimits}
\newcommand{\N}{{\mathbb N}}
\newcommand{\Z}{{\mathbb Z}}
\newcommand{\Q}{{\mathbb Q}}
\newcommand{\h}{{\mathbf \sharp}}
\renewcommand{\P}{{\mathbb P}}
\newcommand{\T}{{\mathbb T\,}}
\newcommand{\la}{\langle\,}
\newcommand{\ra}{\,\rangle}
\newcommand{\cth}{\mathcal{T}_{\mbox{\tiny $H$}}}
\newcommand{\gd}{G_{\delta}}
\newcommand{\cc}{\mathfrak{c}}
\newcommand{\tb}{\textbf}
\date{}
\title[Pseudocompact groups]{Pseudocompact group topologies with no  infinite compact subsets}
\author{Jorge Galindo and Sergio Macario}
\address{Departmento de Matem\'aticas \\
         Universitat Jaume I \\
        Campus Riu Sec, 12071\\
        Castell\'on
        Spain}
\email{jgalindo@mat.uji.es \\ macario@mat.uji.es}
\address{}
\email{}
\subjclass[2000]{Primary 54H11,20K99, Secondary 22A10,22B05.}
\keywords{$G_\delta$-dense,  h-embedded, $\sharp$-property, compact
Abelian group, SCH, torsion-free rank, dominant rank}
\date{}
\thanks{Research   supported by the Spanish Ministry of Science (including FEDER funds), grant
MTM2008-04599/MTM and  Fundaci\'o Caixa Castell\'o-Bancaixa, grant
P1.1B2008-26.}
\begin{document}
\begin{abstract}
We show that  every Abelian group satisfying a mild cardinal
inequality   admits a pseudocompact group topology from which all
countable subgroups inherit the maximal totally bounded topology (we
say that such a topology satisfies property~$\h$).

Every pseudocompact Abelian group $G$ with  cardinality $|G|\leq 2^{2^\cc}$
satisfies this inequality and therefore admits a pseudocompact group
topology with property~$\h$. Under the Singular Cardinal Hypothesis
(SCH) this criterion can be combined with an analysis of the
algebraic structure of
  pseudocompact groups to prove that
every  pseudocompact Abelian group admits a pseudocompact group
topology with property~$\h$.

We also observe that pseudocompact Abelian groups with property~$\h$ contain
no infinite compact subsets and are examples of Pontryagin reflexive
precompact groups that are not compact.
\end{abstract}
\maketitle
\section{Introduction}

A topological space  $X$ is  pseudocompact if every real-valued
continuous function on $X$ is bounded. Pseudocompactness is greatly
enhanced by the addition of algebraic structure.  This fact was
discovered in 1966 by Comfort and Ross \cite{comfross66} who proved
that pseudocompact topological groups are totally bounded or, what
is the same, that they always appear as \emph{subgroups} of compact
groups. They  went  even further and precisely    identified
pseudocompact groups among subgroups of topological groups: a
subgroup of a compact group is pseudocompact if, and only if, it is
$\gd$-dense in its closure (i.e., meets every nonempty $\gd$-subset
of its closure).

A powerful tool to study totally bounded  topologies on Abelian groups is  Pontryagin duality.
This is because   a totally bounded group topology
is always induced by a group of
characters \cite{comfross64} and
 Pontryagin duality is  based on relating a topological
  group with its group of continuous characters.
   We recall here that  a character of a group $G$ is  nothing but  a
     homomorphism of $G$ into the multiplicative
     group $\T$ of complex numbers of
modulus one.

If $G$ is an Abelian topological group, the topology of uniform convergence on
compact subsets of  $G$ makes the  group of continuous characters of $G$,
denoted $G^\wedge$,  into a topological group.  Evaluations then define a homomorphism $\alpha_G\colon G\to G^{\wedge\wedge}
$ between $G$ and the group of all continuous characters on the dual group,
the so-called \emph{bidual} group $G^{\wedge \wedge}$.
When  $\alpha_G$
  is a topological isomorphism we say that $G$ is Pontryagin reflexive.
 It will be necessary for the development of this paper to keep in mind that
 character groups of discrete groups are compact groups. Even if it is not relevant for our purposes we cannot resist here  to add  that character groups of compact groups are again discrete, and that the Pontryagin van-Kampen theorem proves that all locally compact Abelian groups (discrete and compact ones are thus comprised) are reflexive.

 In the present paper Pontryagin duality will appear both as a tool for constructing pseudocompact group topologies and as an objective itself.
 To be  precise,  this paper is motivated by the following two questions
\begin{question}[\cite{chasmart08}]\label{Qref}
Is every Pontryagin reflexive totally bounded Abelian group a compact group?
\end{question}
\begin{question}[\cite{dikrshak05}, Question 25 of \cite{dikrshak07}]\label{Qcomp}
Does every pseudocompact Abelian group admit a  pseudocompact group topology
with no infinite compact subsets?
\end{question}
In this paper we obtain a negative answer to  Question \ref{Qref}
and a positive answer, valid under the Singular Cardinal
Hypothesis (SCH), to Question \ref{Qcomp}. The focus of the paper will be
on Question \ref{Qcomp} with the analysis of Question \ref{Qref} and
its relation with
 Question \ref{Qcomp} deferred to Section 6.

 It should be noted, in a direction opposite to Question
 \ref{Qcomp}, that every pseudocompact group admits pseudocompact
 group topology with nontrivial convergent sequences, see
 \cite{galigarctomi09}.

Our approach to Question \ref{Qcomp} consists in combining
techniques that can be traced back at least to \cite{tkac88} with
the ideas of \cite{galigarc07}.
  Our
construction actually produces pseudocompact Abelian groups with all
countable subgroups $h$-embedded. This is stronger (see Section 2)
that finding pseudocompact group topologies with no infinite compact
subsets. With the aid of results from \cite{hernmaca03} this
construction will yield a wide range of negative answers to Question
\ref{Qref}. As  pointed to  us by M. G. Tkachenko, Question
\ref{Qref} has been answered independently in \cite{ardaetal}.
\subsection*{On notation and terminology}
All groups considered in this paper will be Abelian. So, the
specification \emph{Abelian group} to be found at some points will
respond only to a matter of emphasis. To further avoid the
cumbersome use of the word "Abelian", free Abelian groups will
simply be termed as  \emph{free groups}.

The symbol $\P$ will denote the set of all prime numbers.
\emph{Faute de mieux}, we will use the unusual symbol
$\mathbb{P}^{\uparrow}$ to denote the set of all prime powers, i.e.,
an integer  $k\in \mathbb{P}^{\uparrow}$ if, and only if, $k=p^n$ for
some $p\in \P$ and some positive integer $n$.

For a set $X$ and a cardinal number $\alpha$, $[X]^{\alpha}$ stands for
the collection of all subsets of $X$ with cardinality $\alpha$.

Following Tkachenko \cite{tkac88}, we say that a subgroup $H$ of a
topological group $G$ is $h$-embedded if every homomorphism of $H$
to the unit circle $\T$ can be extended to a \emph{continuous}
homomorphism of $G$ to $\T$. If $G$ is totally bounded and $H$ is
$h$-embedded in $G$, then the topology of $H$ must equal the maximal
totally bounded topology of $H$ (or, using van Douwen's terminology,
$H=H^\sharp$).

The cardinal function $m(\alpha)$ will be often used. The cardinal
$m(\alpha)$ is defined for every infinite cardinal $\alpha$ as the
least cardinal number of a $\gd$-dense subset of a compact group
$K_\alpha$ of weight $\alpha$. It is proved in \cite{comfrobe85}
that this definition does not depend on the choice $K_\alpha$ and
therefore makes sense. The same reference contains proofs of the
following basic essential features of $m(\alpha)$:
\[ \log(\alpha)\leq m(\alpha)\leq (\log(\alpha))^\omega \quad \mbox{
and }\quad  \cf(m(\alpha))>\omega, \quad \mbox{ for every
}\alpha\geq \omega.\] These inequalities have a much simpler form if
\emph{Singular Cardinal Hypothesis} (SCH) is assumed. SCH is a
condition consistent with ZFC that follows from (but is much weaker
than) the \emph{Generalized Continuum Hypothesis} (GCH). Under SCH
every infinite cardinal $\alpha$ satisfies
\[
 m(\alpha)=(\log(\alpha))^\omega.
\]
It is well known that every compact group has cardinality $2^\kappa$
for some cardinal $\kappa$. The question on which cardinals can
appear as the cardinal of a pseudocompact group is not so readily
answered. We will say that a cardinal $\kappa$ is \emph{admissible}
provided there is a pseudocompact group of cardinal $\kappa$. The
first obstructions to admissibility were found by van Douwen
\cite{douw80}, the main one being  that the cardinality $|G|$ of a
pseudocompact group cannot be a strong limit cardinal of countable
cofinality; see \cite[Chapter 3]{dikrshak98} for more information on
admissible cardinals.

Most of our results concern constructing  pseudocompact group
topologies on a given Abelian group $G$. As indicated in the
introduction, every pseudocompact group topology is totally bounded
and a totally bounded group topology $\mathcal{T}$ on an Abelian
group $G$ is always induced by a unique group of characters
$H\subset Hom(G,\T)$, \cite{comfross64,comfross66}. To stress this
latter fact we will usually refer to $\mathcal{T}$ as
$\mathcal{T}_{_{H}}$. Recall that the topology $\mathcal{T}_{_{H}}$
is Hausdorff if, and only if, the subgroup $H$ separates points of
$G$.

We have also introduced above the symbol $G^\wedge$ to denote the group
of all continuous characters of a topological Abelian group equipped with
the compact-open topology. We will use in this context the subscript $_d$
 to indicate that $G$ carries the discrete topology. Thus $(G_d)^\wedge$
equals the set $Hom(G,\T)$ of all homomorphisms into $\T$.
Being a closed subgroup of $\T^G$, $(G_d)^\wedge$ is always a compact group.

Several  purely algebraic notions from the theory of infinite
Abelian groups will be necessary, as for instance the notion of
basic subgroup and the related one of pure subgroup. We refer to
\cite{fuchs} for  the meaning and significance of these properties.
As usual, the symbol $t(G)$ stands for the torsion subgroup of the
group $G$ and $r_0(G)$ denotes the torsion-free rank of $G$.

\section{The dual property to
pseudocompactness} The following theorem is at the heart of the
relationship between questions \ref{Qcomp} and \ref{Qref}.
 \begin{theorem}[\cite{hernmaca03}]\label{hernmaca}
 Let $(G,\cth)$, $H\subset \ho(G,\T)$, be a Hausdorff Abelian totally bounded group.
 $(G,\cth)$ is pseudocompact if, and only if, every countable subgroup of
$(H,\mathcal{T}_{_{G}})$ is $h$-embedded in $(G_d)^\wedge$.
 \end{theorem}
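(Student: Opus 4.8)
The plan is to deduce the theorem from the Comfort--Ross criterion recalled in the Introduction (a subgroup of a compact group is pseudocompact precisely when it is $\gd$-dense in its closure), by rephrasing \emph{both} $\gd$-density and $h$-embeddability in the same purely set-theoretic language via Pontryagin duality between discrete and compact groups. \emph{Step 1: identify the completion of $(G,\cth)=(G,\mathcal{T}_{_H})$.} The evaluation map $\varepsilon\colon g\mapsto\hat g$, $\hat g(\chi)=\chi(g)$, carries $G$ into $\ho(H,\T)=(H_d)^\wedge\subseteq\T^H$, and $\varepsilon$ is a homeomorphism onto its image because $\mathcal{T}_{_H}$ is by definition the topology of pointwise convergence on $H$. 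Since $H$ separates the points of $G$ (Hausdorffness), the annihilator of $\varepsilon(G)$ inside the character group $H$ of the compact group $\ho(H,\T)$ is trivial, so $\varepsilon(G)$ is dense in $\ho(H,\T)$. Hence $\ho(H,\T)$ is a compact group containing $(G,\cth)$ as a dense subgroup, and Comfort--Ross gives: $(G,\cth)$ is pseudocompact if and only if $G$ is $\gd$-dense in $\ho(H,\T)$.

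\emph{Step 2: make $\gd$-density combinatorial.} Any $\gd$-subset of $\T^H$ through a point $\phi_0$ contains a ``coordinate-determined'' set $P_L=\{\phi:\phi|_L=\phi_0|_L\}$ for some countable $L\subseteq H$, and, replacing $L$ by the subgroup it generates, one may take $L$ to be a countable subgroup; conversely every such $P_L$ is itself a $\gd$ in $\T^H$. Because $\T$ is divisible, hence injective as a $\Z$-module, the restriction map $\ho(H,\T)\to\ho(L,\T)$ is onto, so as $\phi$ ranges over $\ho(H,\T)$ its restriction $\phi|_L$ ranges over all of $\ho(L,\T)$. Combining these observations, $G$ is $\gd$-dense in $\ho(H,\T)$ exactly when
\[
(\star)\qquad\text{for every countable subgroup }L\leq H\text{ and every }\psi\in\ho(L,\T)\text{ there is }g\in G\text{ with }\chi(g)=\psi(\chi)\ (\chi\in L).
\]

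\emph{Step 3: recognise $(\star)$ as $h$-embeddability, and conclude.} The group $(G_d)^\wedge=\ho(G,\T)$ is compact, and by Pontryagin duality its dual is canonically $G_d$; thus a \emph{continuous} homomorphism $(G_d)^\wedge\to\T$ is precisely an evaluation $\chi\mapsto\chi(g)$ for some $g\in G$. Consequently a countable subgroup $L$ of $(H,\mathcal{T}_{_G})$, viewed inside $(G_d)^\wedge$, is $h$-embedded in $(G_d)^\wedge$ if and only if every $\psi\in\ho(L,\T)$ is the restriction to $L$ of such an evaluation --- that is, exactly the instance of $(\star)$ for that $L$. Chaining Steps 1--3 yields the stated equivalence. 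I expect Step 2 to be where the genuine care is required: one must justify shrinking an arbitrary $\gd$-set to a set of the form $P_L$ and use the surjectivity of $\ho(H,\T)\to\ho(L,\T)$ to pass freely between homomorphisms of $L$ and restrictions of homomorphisms of $H$; Steps 1 and 3 are routine once the completion of $(G,\cth)$ has been correctly identified with $\ho(H,\T)$.
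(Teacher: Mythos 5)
The paper does not prove this statement; it is imported verbatim from the reference \cite{hernmaca03}, so there is no in-paper argument to compare against. Your proof is correct and is essentially the standard (and the original) argument: identify the completion of $(G,\mathcal{T}_{_H})$ with $(H_d)^\wedge=\ho(H,\T)$, apply the Comfort--Ross characterization of pseudocompactness as $\gd$-density, reduce $\gd$-density in $\ho(H,\T)$ to the extension condition $(\star)$ over countable subgroups $L\leq H$ (using that every nonempty $\gd$-set contains a countable-coordinate cylinder and that $\T$ is injective), and recognise $(\star)$ as $h$-embeddability of $L$ in $(G_d)^\wedge$ via the duality $(G_d)^{\wedge\wedge}\cong G_d$. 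The only cosmetic slip is that density of the evaluation image of $G$ in $\ho(H,\T)$ is automatic (its annihilator in $H$ is trivial because $H$ consists of characters \emph{of} $G$), whereas Hausdorffness, i.e.\ $H$ separating points, is what gives injectivity of the evaluation map; this does not affect the argument.
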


\begin{definition}
We say that a topological group $G$ has property $\h$ if every
countable subgroup of $G$  is $h$-embedded in $G$.
\end{definition}
Thus  property $\h$ is, in the terminology of \cite{hernmaca03},
the dual property of pseudocompactness.

The relation between property $\h$ and Question \ref{Qcomp} is clear
from the following Lemma. Although a  combination of
Propositions~3.4 and 4.4 of \cite{hernmaca03} would  provide an
indirect proof,  we  offer a  direct proof for the reader's
convenience.
\begin{lemma} \label{nocompact}
  Let $(G,\cth)$ denote a totally bounded group with property $\h$.
   Then $(G,\cth)$ has no infinite compact subsets.
\end{lemma}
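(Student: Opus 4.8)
The plan is to argue by contradiction: suppose $K\subseteq (G,\cth)$ is an infinite compact subset. Since $(G,\cth)$ is totally bounded, $K$ is an infinite compact subset of a compact group, so $K$ has cardinality at least $\cc$; in particular $K$ contains a countably infinite subset, and the subgroup $C$ it generates is a countable subgroup of $G$. By property $\h$, $C$ is $h$-embedded in $(G,\cth)$, so the subspace topology that $C$ inherits from $(G,\cth)$ coincides with the maximal totally bounded topology $C^\sharp$ of $C$, i.e. with the topology induced by the \emph{full} character group $\ho(C,\T)=(C_d)^\wedge$.

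Next I would locate an infinite compact subset \emph{inside} a countable subgroup. The obstacle is that a single convergent sequence from $K$ need not lie in $C$ with its limit; but one can instead proceed as follows. A compact subset of a totally bounded group is contained in a $\sigma$-compact, hence Lindel\"of, hence separable metrizable-by-quotients situation is not quite available, so the cleaner route is: every infinite compact Hausdorff space contains a nontrivial convergent sequence (by the classical fact that infinite compact spaces have points of countable $\pi$-character, or simply pick any free ultrafilter limit to get a copy of a convergent sequence inside $K$). Let $x_n\to x$ be such a sequence in $K\subseteq G$. Then $D:=\langle x\rangle + \langle x_n : n\in\omega\rangle$ is a countable subgroup of $G$ containing the convergent sequence $(x_n)$ together with its limit $x$, and $(x_n)$ still converges to $x$ in the subspace topology of $D$ inherited from $(G,\cth)$.

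Now apply property $\h$ to $D$: the subspace topology on $D$ equals $D^\sharp$, the topology induced by all of $\ho(D,\T)$. I would then derive a contradiction from the fact that $D^\sharp$ \emph{cannot} have nontrivial convergent sequences. Indeed, if $x_n\to x$ in $D^\sharp$ with the $x_n$ not eventually equal to $x$, then after replacing $x_n$ by $x_n-x$ we get a sequence $y_n\to 0$ in $D^\sharp$ with infinitely many $y_n\neq 0$; passing to a subsequence we may assume the $y_n$ are distinct and nonzero. But the maximal totally bounded topology is induced by \emph{every} character, and it is a standard fact (essentially van Douwen's, cf.\ the behaviour of $G^\sharp$) that for any infinite set of distinct nonzero elements $\{y_n\}$ of a group $D$ there is a homomorphism $\chi\in\ho(D,\T)$ with $\{\chi(y_n)\}$ not converging to $1$ — one builds $\chi$ on the subgroup generated by the $y_n$ so that $\chi(y_n)$ stays bounded away from $1$ along an infinite subsequence (using that a countable group embeds in a divisible group and $\T$ is divisible, characters can be assigned with great freedom), and then extends $\chi$ to all of $D$ by divisibility of $\T$. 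This contradicts $y_n\to 0$ in $D^\sharp$, completing the proof.

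The main obstacle is the last step: verifying carefully that $D^\sharp$ admits no nontrivial convergent sequences, i.e.\ that for any sequence of distinct nonzero elements of a (countable) group one can find a character separating them from the identity. This is where the combinatorial heart of the argument lies; everything else (compactness giving a convergent sequence, $h$-embeddedness transferring the topology) is routine given Lemma's hypotheses and the definitions recalled above.
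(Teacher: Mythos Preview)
Your argument has a genuine gap at the step where you extract a nontrivial convergent sequence from $K$. It is \emph{not} true that every infinite compact Hausdorff space contains a nontrivial convergent sequence: the Stone--\v{C}ech compactification $\beta\omega$ is the standard counterexample, and it even embeds as a closed (hence compact) subspace of the compact group $\{0,1\}^{\cc}$. So an infinite compact subset $K$ of a totally bounded group may very well have no nontrivial convergent sequences, and your reduction to ``$D^\sharp$ has no nontrivial convergent sequences'' cannot get started. (Incidentally, the earlier claim that an infinite compact subset of a compact group has cardinality at least $\cc$ is also false---think of $\{1\}\cup\{e^{i/n}:n\geq 1\}\subset\T$---though you do not actually use it.)

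The paper's proof avoids this trap by appealing to a stronger fact about $G^\sharp$ than the absence of convergent sequences. It first shows that countable subgroups of $(G,\cth)$ are closed, and then, given an infinite compact $K$, takes any countable $S\subset K$ and considers the countable subgroup $\widetilde{G}=\langle S\rangle$. Since $\widetilde{G}$ carries $\widetilde{G}^\sharp$, the closure of $S$ in the Bohr compactification $b\widetilde{G}$ is carried by the completion map into $\cl_K S\subset G$, which by the first step lies in the countable set $\widetilde{G}$. This contradicts van Douwen's theorem that $|\cl_{b\widetilde{G}}S|=2^{\cc}$ for every infinite $S$. The point is that van Douwen's cardinality result is doing the real work; your weaker input (no convergent sequences in $D^\sharp$) is true but insufficient, precisely because you cannot manufacture a convergent sequence inside $K$ to feed into it.
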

\begin{proof}
We first see that all countable subgroups of $G$ are $\cth$-closed.
Suppose  otherwise that $x\in \cl_{(G,\cth)}N\setminus N$ with $N$ a
countable subgroup of $G$. The subgroup $\widetilde{N}=\la N\cup
\{x\}\ra$ is also countable and, by hypothesis, inherits its maximal
totally bounded group topology from $(G,\cth)$. Since subgroups are
necessarily closed in that topology, it follows that $N$ is closed in
$\widetilde{N}$, which goes against $x\in \widetilde{N}\setminus N$.

Now suppose $K$ is an infinite  compact subset of $G$ and let
$S\subset K$ be a countable subset of $K$. Define $\widetilde{G}=\la
S\ra$ and denote by $\widetilde{G}$  and
$\overline{(\widetilde{G},\cth)}$ the completions of
$\widetilde{G}^\sharp$ and  $(\widetilde{G},\cth)$ respectively.
 Since $\la S\ra$ is $h$-embedded the identity
function $j\colon \widetilde{G}^\sharp \to (\widetilde{G},\cth)$,
extends to a topological isomorphism $\bar{\j}\colon
b\widetilde{G}\to \overline{(\widetilde{G},\cth)}$. Then
$\bar{\j}(\cl_{b\widetilde{G}}S)=\cl_{\overline{(\widetilde{G},\cth)}}j(S)\subset
K$, therefore
$\cl_{\overline{(\widetilde{G},\cth)}}j(S)=\cl_{(\widetilde{G},\cth)}
S$ and, it follows from the preceding paragraph that
$\cl_{b\widetilde{G}}S=\bar{\j}(\cl_{b\widetilde{G}}S)\subset \la
S\ra$.

But a  well known theorem of van Douwen \cite{douw90} (see also
\cite{galihern98} and \cite[Theorem 9.9.51]{arhatkac} for different
proofs and \cite{galihern99fu} for extensions of that result) states
that $|\cl_{b(\widetilde{G})} S|=2^\cc$ and therefore it is
impossible that $\bar{\j}(\cl_{b\widetilde{G}}S)S\subset \la S \ra$.
\end{proof}

We establish next some easily deduced permanence properties.
\begin{proposition}\label{prop:h-prod}
The class of groups having property $\h$ is closed for finite
products.
\end{proposition}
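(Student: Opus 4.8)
The plan is to show that if $G_1$ and $G_2$ both have property $\h$, then so does $G_1\times G_2$; the general finite case then follows by an obvious induction on the number of factors. So let $N$ be a countable subgroup of $G_1\times G_2$ and let $\chi\colon N\to\T$ be an arbitrary character. I must produce a continuous character of $G_1\times G_2$ extending $\chi$.

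The key step is to package $N$ inside a product of countable subgroups of the factors. Let $\pi_i\colon G_1\times G_2\to G_i$ be the coordinate projections and set $N_i=\pi_i(N)$ for $i=1,2$. Each $N_i$ is a countable subgroup of $G_i$, hence $h$-embedded in $G_i$ by hypothesis, and moreover $N\subseteq N_1\times N_2$. It therefore suffices to extend $\chi$ to a character of $N_1\times N_2$: once that is done, a character of $N_1\times N_2$ is of the form $(x_1,x_2)\mapsto\psi_1(x_1)\psi_2(x_2)$ with $\psi_i\in\ho(N_i,\T)$ (since $\T$ is divisible, or simply because $\ho(-,\T)$ turns finite direct sums into products), each $\psi_i$ extends to a continuous character $\widetilde\psi_i$ of $G_i$ by $h$-embeddedness of $N_i$, and then $(g_1,g_2)\mapsto\widetilde\psi_1(g_1)\widetilde\psi_2(g_2)$ is a continuous character of $G_1\times G_2$ restricting to $\chi$ on $N$.

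Thus the heart of the matter is purely algebraic: every character of a subgroup $N$ of an Abelian group $A\times B$ extends to a character of $A\times B$. This is immediate from the divisibility (equivalently, injectivity as a $\Z$-module) of $\T$: the inclusion $N\hookrightarrow A\times B$ induces a surjection $\ho(A\times B,\T)\to\ho(N,\T)$. Applying this with $A=N_1$, $B=N_2$ gives the extension to $N_1\times N_2$ needed above.

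I do not foresee a genuine obstacle here; the only point requiring a moment's care is to be sure that continuity is preserved when we recombine the two extensions. But this is automatic: a character of $G_1\times G_2$ that factors through the two projections as a product of characters continuous on each factor is continuous on the product, since the projections are continuous and multiplication in $\T$ is continuous. Hence $G_1\times G_2$ has property $\h$, and the proposition follows.
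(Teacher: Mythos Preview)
Your proof is correct and follows essentially the same approach as the paper: project $N$ to countable subgroups $N_i=\pi_i(N)$, use the divisibility of $\T$ to extend the character so that it splits as a product $\psi_1\cdot\psi_2$, and then invoke $h$-embeddedness of each $N_i$ in $G_i$. The only cosmetic difference is that the paper extends $\chi$ algebraically to all of $G_1\times G_2$ before splitting, whereas you extend to $N_1\times N_2$ first; the underlying argument is identical.
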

\begin{proof}
Let $G_1$ and $G_2$ be two topological Abelian groups with property
$\h$ and let $N$ be a countable subgroup of $G_1\times G_2$.  Let
$h$ be a homomorphism from $N$ to $\mathbb{T}$. By considering an
arbitrary extension of $h$ to $G_1\times G_2$ we may assume that $h$
is actually defined on $G_1\times G_2$. Since both $\pi_1(N)$ and
$\pi_2(N)$  are countable there will be continuous homomorphisms
$h_i\colon G_i \to \T$, $i=1,2$, with $h_1(x)=h(x,0)$ and
$h_2(y)=h(0,y)$ for all $x\in \pi_1(N)$ and $y \in \pi_2(N)$. The
homomorphism $\bar{h}\colon G_1\times G_2\to \T$ given by
$\bar{h}(x,y)=h_1(x)\cdot h_2(y)$  is  then a continuous extension
of $h$.
\end{proof}

\begin{lemma}\label{lem:quotients}
 Let $\pi:K\to L$ be a continuous surjection between two compact Abelian groups $K$
and $L$
  and suppose that  $N$ is  a subgroup of $L$
  that, as subspace  of $L$, carries the
maximal totally bounded topology. If $M$ is a subgroup of $K$ such
that $\pi_{\upharpoonleft_M}$ is  a group  isomorphism between $M$
and $N$, then $M$ also inherits from $K$ the maximal totally bounded
topology.
\end{lemma}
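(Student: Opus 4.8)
The plan is to reduce both the hypothesis on $N$ and the desired conclusion about $M$ to statements about homomorphisms into $\T$, and then to connect them by composing characters with $\pi$. The key background fact is this: if $P$ is a subgroup of a compact Abelian group $C$, then the topology $C$ induces on $P$ is exactly the totally bounded topology $\mathcal{T}_{H_P}$ determined by the point-separating character group $H_P=\{\phi_{\upharpoonleft_P}\colon\phi\in C^\wedge\}$ (the topology of $C$ is the initial topology with respect to $C^\wedge$, and initial topologies are transitive). Combining this with the Comfort--Ross bijection between point-separating subgroups of $\ho(P,\T)$ and Hausdorff totally bounded topologies on $P$, the subspace topology $\mathcal{T}_{H_P}$ coincides with the maximal totally bounded topology of $P$ if and only if $H_P=\ho(P,\T)$, i.e. if and only if every homomorphism $P\to\T$ is the restriction of a continuous character of $C$.

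With this dictionary the hypothesis on $N$ reads: every homomorphism $N\to\T$ extends to some $\rho\in L^\wedge$. (If one prefers to avoid quoting the Comfort--Ross correspondence, this can be seen directly: such a homomorphism $\psi$ is continuous for the subspace topology, hence uniformly continuous, hence extends continuously to the closure $\overline N$ of $N$ in $L$, which is a compact subgroup; and a character of a closed subgroup of a compact group extends to the whole group, since the restriction map $L^\wedge\to(\overline N)^\wedge$ is onto.) It then suffices to establish the analogous statement for $M$ and $K$.

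So let $\chi\colon M\to\T$ be an arbitrary homomorphism. Since $\pi_{\upharpoonleft_M}\colon M\to N$ is a group isomorphism, $\psi:=\chi\circ(\pi_{\upharpoonleft_M})^{-1}$ is a homomorphism from $N$ to $\T$, so by the previous paragraph we may choose $\rho\in L^\wedge$ with $\rho_{\upharpoonleft_N}=\psi$. Then $\rho\circ\pi$ is a continuous character of $K$, and for every $x\in M$ we have $(\rho\circ\pi)(x)=\rho(\pi(x))=\psi(\pi_{\upharpoonleft_M}(x))=\chi(x)$, that is $\chi=(\rho\circ\pi)_{\upharpoonleft_M}$. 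Thus every homomorphism $M\to\T$ extends to a continuous character of $K$, which by the first paragraph is exactly the assertion that $M$ also inherits from $K$ the maximal totally bounded topology.

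The substance of the argument lies entirely in the first paragraph; the computation with $\chi,\psi,\rho$ is a routine diagram chase. The one point to be careful about is that $\pi_{\upharpoonleft_M}$ is only assumed to be an \emph{algebraic} isomorphism — $M$ need not be closed in $K$, and $\pi$ is far from injective — so it would be a mistake to try to transport the topology through a (nonexistent) homeomorphism $M\approx N$; working throughout at the level of $\ho(-,\T)$ sidesteps this difficulty.
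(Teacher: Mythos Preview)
Your proof is correct. The paper's argument, however, is the direct topological dual of yours and is considerably shorter: denoting by $\mathcal{T}_{_K}$ the topology $M$ inherits from $K$ and by $\mathcal{T}_{_L}$ the topology it inherits from $L$ via $\pi_{\upharpoonleft_M}$, continuity of $\pi$ gives $\mathcal{T}_{_K}\supseteq\mathcal{T}_{_L}$; but $\mathcal{T}_{_L}$ is (by hypothesis, transported through the algebraic isomorphism $\pi_{\upharpoonleft_M}$) the maximal totally bounded topology on $M$, and $\mathcal{T}_{_K}$ is totally bounded since $K$ is compact, so $\mathcal{T}_{_K}=\mathcal{T}_{_L}$. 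Your character-extension argument is exactly this comparison read through the Comfort--Ross dictionary $\mathcal{T}_H\leftrightarrow H$: ``$\mathcal{T}_{_K}\supseteq\mathcal{T}_{_L}$'' becomes ``$\rho\mapsto\rho\circ\pi$ maps $L^\wedge$ into $K^\wedge$'', and ``$\mathcal{T}_{_L}$ is maximal'' becomes ``every $\psi\colon N\to\T$ extends to $L^\wedge$''. So the two proofs have identical content; the paper just stays on the topology side and avoids the translation step, which makes it three lines instead of three paragraphs. Your version has the minor virtue of making explicit \emph{why} the maximal totally bounded topology is the right hypothesis (it is the one for which all characters are continuous), which the paper's one-liner leaves implicit.
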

  \begin{proof}
Denote by  $\mathcal{T}_{_{K}}$ and $\mathcal{T}_{_{L}}$ the
topologies that $M$ inherit from $K$ and $L$  respectively (the
latter obtained through $\pi_{\upharpoonleft_M}$). Since $\pi$ is
continuous, the topology
 $\mathcal{T}_{_{K}}$ is finer than $\mathcal{T}_{_{L}}$, but
 $\mathcal{T}_{_{K}}$ is the maximal totally bounded topology,
therefore $\mathcal{T}_{_{K}}=\mathcal{T}_{_{L}}$.
  \end{proof}

\section{Property $\h$ on torsion-free and bounded groups}
We will make a heavy use of powers of groups in the sequel. If $\sigma$ is a
cardinal number, $K^\sigma$ stands for such powers. We use calligraphical letters,  to denote
sets of coordinates, that is,  subsets of $\sigma$. If $\mathcal{D}\subset \sigma$,
we will denote by $\pi_{\mathcal{D}}^{K}$ the
projection from $K^\sigma$ to $K^{\mathcal{D}}$, if  no confusion is
possible we will simply use $\pi_{\mathcal{D}}$.

\begin{lemma} \label{room}
Let $G$ be a metrizable group and let $\sigma \geq \cc$ and $\alpha$
be cardinal numbers  with $m(\sigma)\leq \alpha$, and
$\alpha^\omega\leq \sigma$.

Then  there exists an independent  $\gd$-dense subset $D\subseteq
G^\sigma$ with  cardinality $m(\sigma)$, $D=\{d_\eta \colon \eta
<m(\sigma)\}$, and two  families of sets of coordinates $\{
\mathcal{S}_\theta \colon \theta \in [\alpha]^\omega\}, \{
\mathcal{N}_\eta\colon \eta< \alpha\}\subset \sigma$  such that:
\begin{enumerate}
\item $|\mathcal{S}_\theta|=\sigma$.
\item $\mathcal{S}_\theta\cap\mathcal{S}_{\theta^\prime}=\emptyset$, if
 $\theta\neq \theta^\prime$.
\item ${\displaystyle \left|\mathcal{S_\theta}\setminus
\bigcup_{\eta \in \theta}\mathcal{N}_\eta\right|=\sigma}$ for every
 $\theta\in [\alpha]^{ \omega}$.
\item Every subset $\{g_\eta \colon \eta <\alpha\}$ of $G^\sigma$ with
$\pi_{_{\mathcal{N}_\eta}}(g_\eta)=\pi_{_{\mathcal{N}_\eta}}(d_\eta)$,
for all $\eta <\alpha$  is $\gd$-dense.
\end{enumerate}
\end{lemma}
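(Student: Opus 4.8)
The plan is to build everything inside a single compact group of the form $K^\sigma$ with $K = (G_d)^\wedge$, or rather to work directly in $G^\sigma$ exploiting that $G$ is metrizable so $G^\sigma$ is a compact (or at least totally bounded) metrizable-fibered product whose $G_\delta$-sets are controlled by countably many coordinates. The starting point is the definition of $m(\sigma)$: since $\sigma \geq \cc$, $G^\sigma$ has a $G_\delta$-dense subset of cardinality $m(\sigma)$ (one first does this for a compact group of weight $\sigma$ and transports it; $G$ metrizable and $\sigma \geq \cc$ guarantees $w(G^\sigma) = \sigma$). A standard transfinite argument — enumerate the $G_\delta$-subsets of $G^\sigma$, of which there are at most $\sigma^\omega \cdot |G^\sigma|$-many but relevantly $m(\sigma)$-many modulo the standard closing-off — lets us choose $D = \{d_\eta : \eta < m(\sigma)\}$ that is simultaneously $G_\delta$-dense and \emph{independent} (pick each $d_\eta$ avoiding the subgroup generated by the previous ones within the current $G_\delta$-set; this is possible because a proper subgroup of size $< m(\sigma) \le |K|$ is not $G_\delta$-dense and hence misses some point of any prescribed $G_\delta$-set — here one uses $\cf(m(\sigma)) > \omega$ and $m(\sigma) \le (\log\sigma)^\omega$ to keep the count right).

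Next I would construct the coordinate families. Since $|[\alpha]^\omega| = \alpha^\omega \le \sigma$, I can partition (most of) $\sigma$ into $\alpha^\omega$ pieces $\{\mathcal{S}_\theta : \theta \in [\alpha]^\omega\}$ each of full size $\sigma$: split $\sigma$ as $\sigma \times (\alpha^\omega)$ under a bijection and let $\mathcal{S}_\theta$ be the $\theta$-th column. This gives (1) and (2) immediately. For the sets $\mathcal{N}_\eta$ ($\eta < \alpha$), the requirement is that they be "small" relative to each $\mathcal{S}_\theta$ — precisely, for each fixed $\theta \in [\alpha]^\omega$ the union $\bigcup_{\eta \in \theta}\mathcal{N}_\eta$ must not swallow more than a non-$\sigma$-sized chunk of $\mathcal{S}_\theta$. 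The clean way is to choose each $\mathcal{N}_\eta$ to be \emph{countable}: then $\bigcup_{\eta \in \theta}\mathcal{N}_\eta$ is countable for every countable $\theta$, while $|\mathcal{S}_\theta| = \sigma \ge \cc > \omega$, giving (3) at once. One must still arrange (4), and this is where $\mathcal{N}_\eta$ cannot be chosen completely freely.

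For (4), the mechanism is: a subset of $G^\sigma$ that agrees with $d_\eta$ on $\mathcal{N}_\eta$ for each $\eta$ should "trap" a copy of $D$ well enough to force $G_\delta$-density. The key point is that every $G_\delta$-subset $P$ of $G^\sigma$ is determined by a countable set of coordinates $C_P \subset \sigma$ (since $G$ is metrizable, $P \supseteq \pi_{C_P}^{-1}\pi_{C_P}(P)$ for some countable $C_P$, after shrinking $P$ to a basic $G_\delta$). So I would enumerate a cofinal family of basic $G_\delta$-sets — there are $\le \sigma^\omega = \sigma \le $ (after the closing-off) $m(\sigma)$ of them up to the relevant equivalence — and, processing the construction of $D$ and the $\mathcal{N}_\eta$ \emph{together} by transfinite recursion of length $m(\sigma)$, ensure that for each basic $G_\delta$-set $P$ with coordinate support $C_P$ there is some stage $\eta$ with $C_P \subseteq \mathcal{N}_\eta$ and $d_\eta \in P$; then any $\{g_\eta\}$ with $\pi_{\mathcal{N}_\eta}(g_\eta) = \pi_{\mathcal{N}_\eta}(d_\eta)$ has $g_\eta \in P$ as well, so $\{g_\eta\}$ meets $P$. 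Running over all $P$ gives $G_\delta$-density. The bookkeeping that makes this recursion fit — interleaving the demands "$D$ independent", "$D$ is $G_\delta$-dense", "each basic $G_\delta$ is captured by some $\mathcal{N}_\eta$", and "$\mathcal{N}_\eta$ countable, inside the complement of the $\mathcal{S}_\theta$'s or harmlessly overlapping them", all within a recursion of length $m(\sigma)$ with the cardinal arithmetic $m(\sigma) \le (\log\sigma)^\omega$, $\sigma \ge \cc$, $\alpha^\omega \le \sigma$, $m(\sigma) \le \alpha$ — is the main obstacle, and the part where the hypotheses must be used with care (especially $\cf(m(\sigma)) > \omega$, to make sure a $G_\delta$-set is never exhausted by earlier choices and that $\alpha$-indexed data stays inside the recursion length). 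Once the recursion is set up correctly, each individual verification of (1)–(4) is routine.
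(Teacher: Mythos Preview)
There is a genuine gap. Your decision to make each $\mathcal{N}_\eta$ \emph{countable} cannot work under the stated hypotheses. For condition~(4) to hold, every coordinate $\gamma<\sigma$ must lie in some $\mathcal{N}_\eta$: otherwise an adversary can set $g_\eta(\gamma)$ outside a fixed proper open $U\subset G$ for every $\eta$, and $\{g_\eta\}$ misses the open cylinder $\{x:x(\gamma)\in U\}$. With $\alpha$ countable sets $\mathcal{N}_\eta$ you cover at most $\alpha$ coordinates, so you would need $\alpha\geq\sigma$; combined with $\alpha^\omega\leq\sigma$ this forces $\sigma^\omega=\sigma$, which is \emph{not} assumed. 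Concretely, take $\alpha=\cc$ and $\sigma=2^\cc$: then $m(\sigma)\leq\cc=\alpha$ and $\alpha^\omega=\cc\leq\sigma$, so the lemma applies, yet $\alpha<\sigma$ and your scheme fails. Your assertion ``there are $\le\sigma^\omega=\sigma\le m(\sigma)$ of them'' is exactly this unwarranted cardinal identity (in the example, $\sigma^\omega=2^\cc$ while $m(\sigma)\le\cc$), so the transfinite recursion you describe cannot be set up in the stated length.

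The paper sidesteps this by making the $\mathcal{N}_\eta$ \emph{large} and organizing $\sigma$ in two levels. It partitions $\sigma$ into $\cc$ blocks $\mathcal{A}_\beta$, each of size $\sigma$, and re-indexes $\alpha$ as $[\cc]^\omega\times\alpha$. For $\widetilde{\eta}=(N,\eta)$ one takes $\mathcal{N}_{\widetilde{\eta}}$ to contain (the relevant slice of) the whole union $\bigcup_{\gamma\in N}\mathcal{A}_\gamma$, and $d_{(N,\eta)}$ is drawn from a pre-chosen independent $G_\delta$-dense subset of the subproduct $G^{\cup_{\gamma\in N}\mathcal{A}_\gamma}$. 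The point is that any countable support $C\subset\sigma$ meets only countably many of the $\cc$ blocks, hence lies inside $\bigcup_{\gamma\in N}\mathcal{A}_\gamma$ for some $N\in[\cc]^\omega$; this is what yields~(4) without ever enumerating $[\sigma]^\omega$. The sets $\mathcal{S}_{\widetilde{\theta}}$ are placed inside a block $\mathcal{A}_{\beta_0}$ with $\beta_0$ above every block index appearing in $\widetilde{\theta}$ (possible since $\cf(\cc)>\omega$), giving~(3) by outright disjointness. The missing idea in your proposal is precisely this block decomposition, which replaces an impossible enumeration of $[\sigma]^\omega$ by one of $[\cc]^\omega$, of size $\cc\le\alpha$.
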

\begin{proof}
Let $\mathcal{A}_\beta=\{a_{\gamma}\colon \gamma<\sigma\}$ be a set
with $|\mathcal{A}_\beta|=\sigma$ and consider the disjoint union
$\mathcal{A}=\bigcup_{\beta<\cc} \mathcal{A}_\beta$. We identify
$G^\sigma$ with $G^\mathcal{A}$ and $\alpha$ with $[\cc]^\omega
\times \alpha$. Since $\alpha^\omega \leq \sigma$, we can as well
decompose each $\mathcal{A}_\beta$ as a disjoint union
${\displaystyle \mathcal{A}_\beta= \bigcup_{\widetilde{\theta}\in
[[\cc]^\omega\times \alpha]^\omega}
\mathcal{A}_{\beta,\widetilde{\theta}}}$ of sets of cardinality
$|\mathcal{A}_{\beta,\widetilde{\theta}}|=\sigma$.

For each $N\in [\cc]^\omega$, let next $F_N=\{f_{(N,\eta)}\colon
\eta<\alpha\}$ be an independent  $\gd$-dense subset of  the product
${\displaystyle G^{\cup_{\gamma\in N} \mathcal{A}_\gamma}}$ (note
that $m(\sigma)\leq \alpha$ and that $G$ is metrizable). Assume that
each $f_{(N,\eta)}$ actually belongs to $G^\mathcal{A}$ by putting
$\pi_{\mathcal{A}_\gamma}(f_{(N,\eta)})=0$ if $\gamma\notin N$.

We now order $\alpha=[\cc]^\omega \times \alpha$ lexicographically
and define the sets $N_{\widetilde{\eta}}$, $\widetilde{\eta} \in
[\cc]^\omega \times \alpha$ and $\mathcal{S}_{\widetilde{\theta}}$,
$\widetilde{\theta} \in [[\cc]^\omega \times \alpha]^\omega$.
 For $\widetilde{\eta}=(N,\eta)\in [\cc]^\omega \times \alpha$  define
$\mathcal{N}_{(N,\eta)}=\bigcup_{\gamma\in N}
\mathcal{A}_{\gamma,\widetilde{\eta}}$
 and given
$\widetilde{\theta}=\{(N_k,\eta_k)\colon k<\omega,\;\:
(N_k,\eta_k)\in [\cc]^\omega \times\alpha\}$, we define
$\mathcal{S}_{\widetilde{\theta}}=\mathcal{A}_{\beta_0,\widetilde{\theta}}$
where $\beta_0$ is such that $\beta \in N_k$ for some $k$, implies
$\beta<\beta_0$ (recall that $\cc$ has uncountable cofinality). By
construction of the sets $\mathcal{A}_{\beta,\widetilde{\theta}}$,
we have
$\mathcal{S}_{\widetilde{\theta}}\cap\mathcal{S}_{\widetilde{\theta}^\prime}=\emptyset$,
when $\widetilde{\theta}\neq \widetilde{\theta}^\prime$. Condition
(3) obviously holds, since $S_{\widetilde{\theta}}$ and
$\bigcup_{\widetilde{\eta}\in\widetilde{\theta}}\mathcal{N}_{\widetilde{\eta}}$
are even disjoint.

Define finally $D=\{ f_{\widetilde{\eta}}\colon \widetilde{\eta}\in
[\cc]^\omega  \times \alpha\}=\cup_{N\in[\cc]^\omega}F_N$.

Suppose $\widetilde{D}=\{g_{\widetilde{\eta}}\colon
\widetilde{\eta}\in [\cc]^\omega \times \alpha\}$ is such that
$\pi_{_{\mathcal{N}_{\widetilde{\eta}}}} (g_{\widetilde{\eta}})=
\pi_{_{\mathcal{N}_{\widetilde{\eta}} } } (f_{\widetilde{\eta}})$,
for all $\widetilde{\eta} \in [\cc]^\omega \times \alpha$.

 To check that $\widetilde{D}$ is
indeed $\gd$-dense we choose a $\gd$-subset $U$ of
 $G^\mathcal{A}$ .
There will be then $N=\{\alpha_n \colon n<\omega\}\in [\cc]^\omega$
and  a $\gd$-set $V \subset G^{\cup\mathcal{A}_{\alpha_n}}$ such
that $\{ \bar{x}\in G^{\mathcal{A}}\colon
\pi_{\cup_n\mathcal{A}_{\alpha_n}} (\bar{x})\in V \mbox{ for each
}n<\omega\}\subset U$. Since $F_N$ is $\gd$-dense in
$G^{{\displaystyle \cup_{\gamma\in N}}
\mathcal{A}_\gamma}=G^{{\displaystyle \cup_n
\mathcal{A}_{\alpha_n}}}$, there will be
  an element $f_{(N,\eta)}\in F_N$ with
 $\pi_{\cup_n \mathcal{A}_{\alpha_n}}(f_{(N,\eta)})\in V$ for every $\alpha_n \in N$.

As $ g_{(N,\eta)}$ and $f_{(N,\eta)}$ have the same $\cup_{\gamma
\in N}\mathcal{A}_\gamma$-coordinates, we conclude that
$g_{(N,\eta)}\in U\cap \widetilde{D}$.
\end{proof}

If $\chi$ is  a homomorphism  between two groups $G_1$ and $G_2$ and
 $\sigma$ is a cardinal number, we denote by $\chi^{\sigma}$  the
 product homomorphism
 $\chi^\sigma \colon G_1^{\sigma}\to G_2^{\sigma}$ defined by
$\chi^{\sigma}((g_\eta)_{\eta<\sigma})=(\chi(g_\eta))_{\eta<\sigma}$.
It is easily verified that, for any $\mathcal{D}\subseteq\sigma$,
the projections $\pi_{\mathcal{D}}^{G_i}:G_i^{\sigma}\to
G_i^{\mathcal{D}}$, $i=1,2$ satisfy
\[
\pi_{\mathcal{D}}^{G_2}\circ\chi^{\sigma}=\chi^{\mathcal{D}}\circ \pi_{\mathcal{D}}^{G_1}
\]

\begin{corollary}\label{cor:room}
  Let $\chi \colon G_1\to G_2$ be a surjective homomorphism between two
   metrizable
  groups $G_1$ and $G_2$. If $\sigma$ and $\alpha$ are  cardinal numbers
  with $m(\sigma)\leq \alpha$ and $\alpha^\omega\leq \sigma$, then
  it is possible to find an independent  $\gd$-dense subset $D$ of
  $G_1^\sigma$  satisfying the properties of Proposition
  \ref{room} such that in addition $\chi^\sigma(D)$ is an
  independent subset of $G_2^\sigma$.
\end{corollary}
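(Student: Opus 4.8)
The plan is to re-run the construction of Lemma~\ref{room}, applied to $G_1$, modifying only the choice of the building blocks $F_N$. In that proof the elements $f_{(N,\eta)}$ are produced by a transfinite recursion (implicit in, and needed for, the proof: it is what makes $D=\cup_N F_N$ independent and not just each individual $F_N$): at each stage $f_{(N,\eta)}$ is placed inside a prescribed nonempty $\gd$-subset of $G_1^{\,\cup_{\gamma\in N}\mathcal{A}_\gamma}$ — the demand that eventually makes $D$ $\gd$-dense — while the family of all elements chosen so far is kept independent in $G_1^{\mathcal{A}}$. I would impose one further demand at every stage: that the family $\{\chi^{\mathcal{A}}(f_{(N',\eta')})\}$ of images chosen so far also stay independent in $G_2^{\mathcal{A}}$. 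Since $\chi$ is surjective, $\chi^\sigma$ is surjective, and it commutes with all the projections $\pi_{\mathcal{D}}$; so once the recursion is completed, $D$ is independent, $\chi^\sigma(D)$ is independent, and every other conclusion of Lemma~\ref{room} — the families $\mathcal{S}_\theta,\mathcal{N}_\eta$ and properties (1)--(4), whose verification only uses the $\gd$-density demand — holds verbatim.

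What must be checked is that the recursive step is still feasible. Fix a stage $(N,\eta)$, put $E=\cup_{\gamma\in N}\mathcal{A}_\gamma$, and let $U\subseteq G_1^{E}$ be the prescribed nonempty $\gd$-set. For a finite subfamily of the elements chosen so far, a tuple of integer coefficients for them, and a coefficient $n\in\Z$ for $f_{(N,\eta)}$, the choices of $f_{(N,\eta)}$ that would break independence of the elements form (if nonempty) a coset of $(G_1^{E})[n]=(G_1[n])^{E}$, with $G_1[n]=\{x\in G_1:nx=0\}$; and the choices that would break independence of the images form (if nonempty) a coset of $(\chi^{E})^{-1}\big((G_2^{E})[n]\big)=\big(\chi^{-1}(G_2[n])\big)^{E}$. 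Now $G_1[n]$ is a proper subgroup of $G_1$ unless $nG_1=0$, and $\chi^{-1}(G_2[n])$ is proper unless $nG_2=0$ (surjectivity of $\chi$ again), and in those exceptional cases the relation concerned holds trivially and restricts nothing. Thus every genuinely forbidden set is a coset of $A^{E}$ for some proper subgroup $A\subsetneq G_1$, i.e. a ``box'' $\prod_{j\in E}(c_j+A)$ all of whose factors are proper subsets of $G_1$; and there are at most $\cc\cdot\alpha\le\sigma$ of them (at most $\cc\cdot\alpha$ predecessors of $(N,\eta)$, only countably many coefficient tuples per finite subfamily).

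The main obstacle is precisely to dodge these forbidden sets while still landing in $U$, now that there are twice as many of them — but this is the familiar point that a box with all factors proper contains no nonempty $\gd$-cylinder, and there are few enough boxes to be escaped one coordinate at a time. In detail: since $G_1$ is metrizable, $U$ contains a cylinder $V\times G_1^{J}$ with $V$ nonempty open and $J=E\setminus I$ for some countable $I$, so $|J|=\sigma$; fix $v\in V$. Assign to each of the $\le\sigma$ forbidden boxes a distinct coordinate $j\in J$, and at that coordinate pick a value outside $c_j+A$, possible because $A\subsetneq G_1$. The element $x$ that equals $v$ on $I$, equals these escaping values at the assigned coordinates, is arbitrary elsewhere in $J$, and is $0$ off $E$, lies in $U$ and in none of the forbidden boxes; take $f_{(N,\eta)}=x$. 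With the recursion seen to go through, Corollary~\ref{cor:room} follows, its additional demand ``$\chi^\sigma(D)$ independent'' being exactly the clause inserted at every stage.
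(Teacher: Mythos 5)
Your bookkeeping for the two independence requirements is correct and is exactly what the paper's one\-/line proof calls for: the forbidden sets are cosets of $(G_1[n])^{E}$ and of $\bigl(\chi^{-1}(G_2[n])\bigr)^{E}$, these subgroups are proper unless $nG_1=0$ (resp.\ $nG_2=0$, by surjectivity), in which exceptional cases the relation is vacuous, and there are at most $\cc\cdot\alpha\le\sigma$ such boxes to dodge. The gap is in the mechanism you propose for $\gd$-density. You assume there is a family of nonempty $\gd$-sets $U_{(N,\eta)}\subseteq G_1^{E}$, one per element of $F_N$, such that \emph{any} transversal (any choice of one point from each) is $\gd$-dense. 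No such family can exist when $\alpha<\sigma$. Indeed, if every transversal of $\{U_\eta\}_{\eta<\alpha}$ is $\gd$-dense, then every nonempty $\gd$-set $W$ must contain some $U_\eta$ (otherwise choose $f_\eta\in U_\eta\setminus W$ for every $\eta$ and the resulting transversal misses $W$). But each nonempty $\gd$-set $U_\eta$ contains a cylinder $\pi_{I_\eta}^{-1}(W_\eta)$ with $I_\eta$ countable, hence projects \emph{onto} $G_1$ in every coordinate $j\notin I_\eta$ and so is contained in no fiber $\pi_{\{j\}}^{-1}(y)$ for such $j$. Since $|E|=\sigma$ and $\bigl|\bigcup_\eta I_\eta\bigr|\le\alpha<\sigma$, there is a coordinate $j$ and a point $y\in G_1$ for which $\pi_{\{j\}}^{-1}(y)$ is a nonempty $\gd$-set containing no $U_\eta$; some transversal then misses it. The hypotheses allow $\alpha<\sigma$ (e.g.\ $\alpha=\cc$, $\sigma=2^{\cc}$), and this is precisely the regime used in Theorem \ref{main}, so the recursive scheme cannot deliver the $\gd$-density of the $F_N$, and with it properties (1)--(4) collapse.

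This is the reason Lemma \ref{room} is organized the way it is: with only $\alpha$ points available against $\sigma^\omega$-many fibers $\pi_I^{-1}(y)$ to be met, each point must witness many $\gd$-conditions simultaneously, which is achieved by taking the $F_N$ to be \emph{already} $\gd$-dense sets and recording, through the sets $\mathcal{N}_\eta$ and property (4), exactly which coordinates may later be altered without destroying $\gd$-density. The repair is to run your (correct) coset-avoidance not against a prescribed $\gd$-set but against the constraint ``$f_{(N,\eta)}$ agrees with a pre-chosen $\gd$-dense family on the coordinates $\mathcal{N}_{(N,\eta)}$'', carrying out the escape on a spare block of $\sigma$ coordinates disjoint from all the $\mathcal{N}_\eta$ (this is what the $\mathcal{S}_\theta$ are for); the same device is what should be used to make the union $D=\bigcup_N F_N$ and its $\chi^\sigma$-image independent, the point you rightly observe the paper leaves implicit. (A minor further slip: the set $V$ you extract from $U$ is a nonempty $\gd$-subset of a countable subproduct, not an open set, though your argument only needs a point of it.)
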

\begin{proof}
  It suffices to repeat the proof  of Lemma \ref{room} taking care to
  choose the sets $F_N$ in such a way that $\chi^{\cup_{\gamma \in
  N}\mathcal{A}_\gamma}(F_N)$ is also independent.
\end{proof}

\begin{proposition}\label{prop:freetorsion}
Let $\chi \colon G\to \T$ be a surjective character of  a compact
metrizable group $G$. If $\sigma $ and $\alpha$ are cardinal numbers
with $m(\sigma)\leq \alpha$, and $\alpha^\omega\leq \sigma$,  then
the topological group $G^\sigma$ contains an independent $\gd$-dense
subset $F$ of cardinality $\alpha$
 such that $F$ and $\chi^\sigma (F)$ generate isomorphic groups  with
 property $\h$.
\end{proposition}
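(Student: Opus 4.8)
The plan is to obtain $F$ by surgically altering, on the coordinate blocks supplied by Corollary~\ref{cor:room}, the independent $\gd$-dense set it provides. First I would apply Corollary~\ref{cor:room} to the surjective character $\chi\colon G\to\T$ (both $G$ and $\T$ being metrizable), obtaining an independent $\gd$-dense set $D=\{d_\eta\colon\eta<\alpha\}\subseteq G^\sigma$ with $\chi^\sigma(D)$ independent, together with the coordinate families $\{\mathcal{N}_\eta\colon\eta<\alpha\}$ and $\{\mathcal{S}_\theta\colon\theta\in[\alpha]^\omega\}$ of Lemma~\ref{room}. Put $\mathcal{S}^{*}_\theta=\mathcal{S}_\theta\setminus\bigcup_{\eta\in\theta}\mathcal{N}_\eta$; by items~(2) and~(3) the sets $\mathcal{S}^{*}_\theta$ are pairwise disjoint, each of cardinality $\sigma$, and $\mathcal{S}^{*}_\theta\cap\mathcal{N}_\eta=\emptyset$ whenever $\eta\in\theta$. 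The set $F=\{g_\eta\colon\eta<\alpha\}$ will be defined so that $g_\eta$ coincides with $d_\eta$ outside $\bigcup_{\theta\ni\eta}\mathcal{S}^{*}_\theta$, and so that, for each $\theta$, only the entries of those $g_\eta$ with $\eta\in\theta$ along coordinates in $\mathcal{S}^{*}_\theta$ are changed. The prescriptions cannot conflict, since the $\mathcal{S}^{*}_\theta$ are pairwise disjoint, and since no $\mathcal{S}^{*}_\theta$ meets the corresponding $\mathcal{N}_\eta$, $g_\eta$ still agrees with $d_\eta$ on $\mathcal{N}_\eta$; hence Lemma~\ref{room}(4) keeps $F$ $\gd$-dense.

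Inside each $\mathcal{S}^{*}_\theta$ I would single out two disjoint subsets $A_\theta$ and $B_\theta$, both of cardinality $\sigma$, for two unrelated purposes. On $A_\theta$ I place a ``diagonal'' pattern: fix once and for all an $x\in G$ of infinite order with $\chi(x)$ also of infinite order (one exists because $\chi$ maps onto $\T$), fix an injection $\eta\mapsto c^{\theta}_\eta$ of $\theta$ into $A_\theta$, set $(g_\eta)_{c^{\theta}_\eta}=x$, and let all other $A_\theta$-coordinates of $g_\eta$ (for $\eta\in\theta$) be $0$. Reading a putative finite relation $\sum_\eta n_\eta g_\eta=0$ --- or its image $\sum_\eta n_\eta\chi^\sigma(g_\eta)=0$ --- along the coordinates $c^{\theta}_\eta$ for a $\theta$ containing its support yields $n_\eta x=0$ (respectively $n_\eta\chi(x)=0$), hence $n_\eta=0$. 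So $F$ and $\chi^\sigma(F)$ are independent sets of infinite order elements (in particular $|F|=\alpha$), both $\la F\ra$ and $\la\chi^\sigma(F)\ra$ are free abelian on them, and $g_\eta\mapsto\chi^\sigma(g_\eta)$ extends to the required isomorphism $\la F\ra\to\la\chi^\sigma(F)\ra$. On $B_\theta$, whose size $\sigma$ satisfies $\sigma\ge\cc=|\ho(\Z^{(\theta)},\T)|$, I fix an injective labelling $\phi\mapsto s^{\theta}_\phi$ of the homomorphisms $\phi\colon\Z^{(\theta)}\to\T$ by points of $B_\theta$ and set $(g_\eta)_{s^{\theta}_\phi}$ to be some element of $\chi^{-1}(\phi(e_\eta))$, where $(e_\eta)_{\eta\in\theta}$ is the standard basis of $\Z^{(\theta)}$; the remaining $B_\theta$-coordinates of the $g_\eta$, $\eta\in\theta$, are $0$.

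Property~$\h$ then falls out for both groups. A countable subgroup of $\la F\ra$ is contained in $N_\theta:=\la g_\eta\colon\eta\in\theta\ra$ for a suitable $\theta\in[\alpha]^\omega$; by the previous step $N_\theta$ is freely generated by $\{g_\eta\colon\eta\in\theta\}$, so its homomorphisms to $\T$ are exactly the members of $\ho(\Z^{(\theta)},\T)$. Given a homomorphism of the countable subgroup, I extend it to $N_\theta$ (using divisibility of $\T$), let $\phi$ be the corresponding element of $\ho(\Z^{(\theta)},\T)$, and note that the continuous character $\chi\circ\pi_{s^{\theta}_\phi}$ of $G^\sigma$ sends $g_\eta$ to $\chi\bigl((g_\eta)_{s^{\theta}_\phi}\bigr)=\phi(e_\eta)$; its restriction to $\la F\ra$ thus extends the homomorphism in question. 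The identical computation with the coordinate characters $\pi_{s^{\theta}_\phi}$ of $\T^\sigma$ --- using $\pi_{s^{\theta}_\phi}(\chi^\sigma(g_\eta))=\chi((g_\eta)_{s^{\theta}_\phi})$ --- gives property~$\h$ for $\la\chi^\sigma(F)\ra$.

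The point that needs care, and which I expect to be the main obstacle, is the circularity hidden in ``store every homomorphism of $N_\theta$ in coordinates'': to know what $\ho(N_\theta,\T)$ is I need $N_\theta$ to be free on the prescribed generators, yet that freeness is itself something the construction must secure. Splitting $\mathcal{S}^{*}_\theta$ into the independence block $A_\theta$ and the encoding block $B_\theta$ --- and keeping both clear of the blocks $\mathcal{N}_\eta$ that are frozen for $\gd$-density --- is exactly what disentangles these requirements. The cardinal hypotheses $m(\sigma)\le\alpha$ and $\alpha^\omega\le\sigma$ enter only through the invocation of Lemma~\ref{room}/Corollary~\ref{cor:room}, together with $\sigma\ge\cc$, which is what makes room for $\ho(\Z^{(\theta)},\T)$ inside $B_\theta$.
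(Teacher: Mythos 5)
Your proposal is correct and follows essentially the same route as the paper: both start from Lemma~\ref{room}/Corollary~\ref{cor:room} and then rewrite, for each $\theta\in[\alpha]^\omega$, the coordinates in $\mathcal{S}_\theta\setminus\bigcup_{\eta\in\theta}\mathcal{N}_\eta$ so that the countable subgroup indexed by $\theta$ (and its $\chi^\sigma$-image) carries the maximal totally bounded topology, while $\gd$-density survives because the coordinates $\mathcal{N}_\eta$ are left untouched. The only difference is one of implementation: where the paper invokes an abstract embedding $j_\theta$ of $\left\langle \chi^\sigma(d_\eta)\colon\eta\in\theta\right\rangle^\sharp$ into $\T^{\mathcal{D}_\theta}$ and transfers the topology via Lemma~\ref{lem:quotients}, you write that Bohr embedding out explicitly --- one coordinate per homomorphism of $\Z^{(\theta)}$ into $\T$, plus a separate diagonal block securing freeness --- and verify the character-extension property directly from the coordinate projections.
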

 \begin{proof}
We begin with a $\gd$-dense  subset   of $G^\sigma$, $D=\left\{
d_\eta \colon \eta<\alpha\right\}$,  with the properties of Lemma~\ref{room}
and Corollary~\ref{cor:room}. We have thus
two families of sets $\{\mathcal{S}_\theta,\: \colon\: \theta\in[\alpha]^{\omega}\}$,
$\{\mathcal{N}_\eta,\: \colon\: \eta<\alpha\}\subset\sigma$
with the properties (1) through (4) of that Lemma.

Next, for every $\theta\in [\alpha]^{\omega} $, we choose and fix a
set of coordinates $\mathcal{D}_\theta \subseteq \sigma$ of
cardinality $|\mathcal{D}_\theta|=\sigma$ in such a way that
\[\mathcal{D}_\theta\subseteq \mathcal{S}_\theta
 \setminus {\displaystyle\bigcup_{\eta \in \theta}
 \mathcal{N}_\eta}\]
(recall that  by Lemma \ref{room},
$\left|\mathcal{S_\theta}\setminus \bigcup_{\eta \in
\theta}\mathcal{N}_\eta\right|=\sigma$)

Given each $\theta\in[\alpha]^{\omega}$, we consider the free
subgroup
  $\left\langle  \chi^\sigma(d_{\eta})
\colon \eta \in \theta \right\rangle$ and equip it with its maximal
totally bounded topology. Denoting  the resulting topological  group
as $\left\langle  \chi^\sigma( d_{\eta}) \colon \eta \in \theta
\right\rangle^\sharp$, and taking into account that it has weight
$\cc$, we can find   an embedding
\begin{equation}\label{(1)}
j_{\theta} \colon \left\langle
\chi^\sigma(d_{\eta}) \colon \eta \in \theta \right\rangle^{\sharp}
\hookrightarrow \T^{\mathcal{D}_\theta}.
\end{equation}

 For each $\theta \in [\alpha]^\omega$ and each $\eta\in \theta$,
 let
$g_{\eta,\theta}$ denote an element of $G^\mathcal{D_\theta}$ with
$\chi^{\mathcal{D}_\theta}(g_{\eta,\theta})=j_\theta(\chi^\sigma(d_\eta))$.
Observe that the set $\{ g_{\eta,\theta}\colon \eta \in \theta\}$ is
independent.

We finally define the elements $f_\eta$, $\eta<\alpha$,  by the
rules: \begin{align*} &\pi_{\mathcal{D}_\theta}^G(f_\eta)=
g_{\eta,\theta} \mbox{, if } \theta \in [\alpha]^\omega \mbox{ is
such that } \eta \in \theta, \qquad \mbox{ and }\\ &
\pi_{\gamma}^G(f_\eta)=\pi_\gamma^G(d_\eta) \mbox{ if }  \gamma
\notin  \mathcal{D}_\theta \mbox{ for any } \theta \in
[\alpha]^{\omega} \mbox{ with } \eta \in \theta. \end{align*}

 Let us see that $F=\{f_\eta\colon\eta<\alpha\}$
satisfies the desired properties:
\begin{enumerate}
\item \emph{$F$  and $\chi^\sigma(F)$ are  independent.} Suppose
that $\sum_{k=1}^m n_k f_{\eta_k}=0$ with $n_k\in \Z$. Choose then
$\theta \in [\alpha]^\omega$ with $\eta_1,\ldots , \eta_{m}, \in
\theta$. Since
$\pi_{\mathcal{D}_\theta}^G(f_{\eta_k})=g_{{\eta_k},\theta}$ and the
set $\{g_{\eta,\theta} \colon \eta \in \theta\}$ is independent, the
independence of $F$ follows. Since
$\pi_{\mathcal{D}_\theta}(\chi^\sigma(f_{\eta}))=\chi^{\mathcal{D}_\theta}(g_{\eta,\theta})$,
$\chi^\sigma(F)$ is also independent. It is easy to see, now, that
$\langle F\rangle$ and $\langle\chi^\sigma(F)\rangle$ are isomorphic.

 \item \emph{The subgroup $\langle \chi^\sigma(F)\rangle$ has property $\h$.}
Let $N$ be a countable subgroup of $\la\chi^\sigma(F)\ra$. Let
$\theta\in [\alpha]^\omega$ be  such that $N\subseteq \la
\chi^\sigma(f_\eta) \colon \eta \in \theta\ra$ and define
$N_\theta:=\la f_\eta \colon \eta \in \theta \ra$.


Observe  finally that $\pi_{\mathcal{D}_\theta}^{\T}(N)=
\chi^{\mathcal{D}_\theta}(\pi_{\mathcal{D}_\theta}^G(N_\theta))$.
This last subgroup is just $j_\theta\left(\left\langle \chi^\sigma(
d_{\eta}) \colon \eta \in \theta \right\rangle\right)$
 and the latter carries by construction its maximal totally bounded
 topology, since
 the restriction of  $\pi_{\mathcal{D}_\theta}^{\T}\colon
\T^{\sigma}\to\T^{\mathcal{D}_\theta}$  to $N$ is a group
isomorphism onto $\pi_{\mathcal{D}_\theta}^{\T}(N)
=\chi^{\mathcal{D}_\theta}(\pi_{\mathcal{D}_\theta}^G(N_\theta))$,
  Lemma~\ref{lem:quotients}   applies.

\item \emph{$\langle F\rangle $ has property $\h$.}
Take $\pi=\chi^{\sigma}$, $K=G^\sigma$ and $ L=\T^\sigma$. Bearing
in mind that the restriction to $\la F\ra$ is an isomorphism because
$F$ and $\chi^{\sigma}(F)$ are independent sets,
  Lemma~\ref{lem:quotients} applies again.
\item \emph{$F$  is a  $G_\delta$-dense subset of $G^\sigma$.}
Observe that, for every $\eta<\alpha$,  $f_\eta$ coincides with
$d_\eta$ on the set of coordinates  $\mathcal{N}_\eta $,  for
$\mathcal{D}_\theta\subseteq \mathcal{S}_\theta
 \setminus {\displaystyle\bigcup_{\eta \in \theta} \mathcal{N}_\eta}$.
 Since $D$ has the properties of Lemma~\ref{room},
 we conclude that $F$ is  $\gd$-dense.
\end{enumerate}
\end{proof}
\begin{proposition}\label{prop:boundedtorsion}
Let $\sigma $  and $\alpha$ be cardinal numbers  with $m(\sigma)\leq
\alpha$, and $\alpha^\omega\leq \sigma$. The topological group
$\Z(p)^{\sigma}$ contains an independent $\gd$-dense subset $H$ with
property $\h$.
\end{proposition}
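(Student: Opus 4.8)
The plan is to repeat, in the (simpler) torsion setting, the construction carried out in the proof of Proposition~\ref{prop:freetorsion}, now with $\Z(p)$ playing simultaneously the roles of the compact metrizable group $G$ and of the target group $\T$; the surjective character $\chi$ (and hence Corollary~\ref{cor:room}) is no longer needed. Since $\Z(p)$ is finite and discrete it is metrizable, and $\sigma\geq\cc$ is forced by $m(\sigma)\leq\alpha$ and $\alpha^\omega\leq\sigma$ (as $\cf(m(\sigma))>\omega$ we get $\alpha\geq m(\sigma)\geq\omega_1$, whence $\sigma\geq\alpha^\omega\geq\cc$). First I would apply Lemma~\ref{room} to $G=\Z(p)$, obtaining an independent $\gd$-dense subset $D=\{d_\eta:\eta<\alpha\}$ of $\Z(p)^\sigma$ together with coordinate families $\{\mathcal{S}_\theta:\theta\in[\alpha]^\omega\}$ and $\{\mathcal{N}_\eta:\eta<\alpha\}$ enjoying properties (1)--(4); and for each $\theta\in[\alpha]^\omega$ I would fix, using property (3), a set $\mathcal{D}_\theta\subseteq\mathcal{S}_\theta\setminus\bigcup_{\eta\in\theta}\mathcal{N}_\eta$ with $|\mathcal{D}_\theta|=\sigma$, noting that by property (2) these sets are pairwise disjoint.

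The decisive step is the coordinate surgery. For any $\theta\in[\alpha]^\omega$ the subgroup $\langle d_\eta:\eta\in\theta\rangle$ is a countable elementary abelian $p$-group; equipped with its maximal totally bounded topology $\langle d_\eta:\eta\in\theta\rangle^\sharp$ its Bohr compactification, being a compact group of exponent $p$ and weight at most $\cc$, is topologically isomorphic to $\Z(p)^\kappa$ for some $\kappa\leq\cc\leq\sigma$, so there is a topological embedding $j_\theta\colon\langle d_\eta:\eta\in\theta\rangle^\sharp\hookrightarrow\Z(p)^{\mathcal{D}_\theta}$. I then define $H=\{h_\eta:\eta<\alpha\}\subseteq\Z(p)^\sigma$ by the rules $\pi_{\mathcal{D}_\theta}(h_\eta)=j_\theta(d_\eta)$ whenever $\eta\in\theta\in[\alpha]^\omega$, and $\pi_\gamma(h_\eta)=\pi_\gamma(d_\eta)$ for every coordinate $\gamma$ that lies in no $\mathcal{D}_\theta$ with $\eta\in\theta$; the pairwise disjointness of the $\mathcal{D}_\theta$ makes this definition unambiguous.

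It then remains to check the three required properties. \emph{Independence of $H$:} given a relation $\sum_k n_k h_{\eta_k}=0$, pick $\theta\in[\alpha]^\omega$ containing all the $\eta_k$ and apply $\pi_{\mathcal{D}_\theta}$; since $j_\theta$ is injective and $\{d_\eta:\eta\in\theta\}$ is independent, $\{j_\theta(d_\eta):\eta\in\theta\}$ is independent, forcing $p\mid n_k$, i.e.\ $n_k h_{\eta_k}=0$, for each $k$. \emph{Property $\h$:} a countable subgroup $N$ of $\langle H\rangle$ lies in $\langle h_\eta:\eta\in\theta\rangle$ for a suitable $\theta\in[\alpha]^\omega$, and by the independence of $H$ together with the injectivity of $j_\theta$ the map $\pi_{\mathcal{D}_\theta}$ carries $N$ isomorphically onto a subgroup of $j_\theta(\langle d_\eta:\eta\in\theta\rangle^\sharp)$, which carries the maximal totally bounded topology; Lemma~\ref{lem:quotients}, applied with $K=\Z(p)^\sigma$, $L=\Z(p)^{\mathcal{D}_\theta}$ and $\pi=\pi_{\mathcal{D}_\theta}$, then shows that $N$ inherits the maximal totally bounded topology from $\Z(p)^\sigma$. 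Since $\Z(p)^\sigma$ is compact, every homomorphism $N\to\T$ is therefore continuous on $N$, extends by uniform continuity to the closed subgroup $\cl_{\Z(p)^\sigma}N$, and thence---closed subgroups of compact abelian groups have the character extension property---to a continuous character of $\Z(p)^\sigma$ whose restriction to $\langle H\rangle$ extends the original homomorphism, so $N$ is $h$-embedded in $\langle H\rangle$. \emph{$\gd$-density of $H$:} by construction $\mathcal{D}_\theta$ is disjoint from $\mathcal{N}_\eta$ whenever $\eta\in\theta$, so $\pi_{\mathcal{N}_\eta}(h_\eta)=\pi_{\mathcal{N}_\eta}(d_\eta)$ for every $\eta<\alpha$, and property (4) of Lemma~\ref{room} gives that $H$ is $\gd$-dense.

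As in Proposition~\ref{prop:freetorsion}, the main obstacle is making the coordinate surgery that forces property $\h$ coexist with $\gd$-density: the redefinitions must leave the $\mathcal{N}_\eta$-coordinates (on which $\gd$-density rests) untouched and must be mutually consistent over the various $\theta$'s. Both points are taken care of by the disjointness relations (2) and (3) provided by Lemma~\ref{room}, so once that lemma is set up the torsion case is in fact lighter than the torsion-free one, no auxiliary independence of an image set $\chi^\sigma(D)$ being required.
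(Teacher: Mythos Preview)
Your proposal is correct and follows essentially the same approach as the paper's (very terse) proof: repeat the coordinate-surgery construction of Proposition~\ref{prop:freetorsion} with $\Z(p)$ in place of both $G$ and $\T$, embedding each countable subgroup as $\left(\oplus_\omega\Z(p)\right)^\sharp$ into the block $\Z(p)^{\mathcal{D}_\theta}$. You have in fact supplied the details the paper omits, and correctly noted the simplification that Corollary~\ref{cor:room} (and the auxiliary independence of $\chi^\sigma(D)$) is unnecessary in the bounded-torsion case.
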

 \begin{proof}
Proceed  exactly as  in Proposition~\ref{prop:freetorsion} and
construct an embedding into $\Z(p)^\sigma$. To obtain the
$\h$-property we identify countable subgroups with Bohr groups of
the form $\left(\oplus_{\omega} \Z(p)\right)^\sharp$.
 \end{proof}

\section{The algebraic structure of pseudocompact Abelian groups}

We obtain here some results on the algebraic structure of
pseudocompact that will be useful in the next section.  The first of
them is   inspired (and shares a part of its proof) from the first
part of the proof of Lemma 3.2 of \cite{galigarc07}. We sketch here
the proof for the reader's convenience. We  thank Dikran Dikranjan
for pointing a misguiding sentence in a previous version of this
proof.
\begin{lemma}\label{descomp}
Every Abelian group admits a decomposition \[ G=\left(
\bigoplus_{p^k \in \P_0^\uparrow} \bigoplus_{\gamma(p^k)}\Z(p^k)
\right)\bigoplus H \] where $ \P_0^\uparrow $ is a finite subset of
$\P^\uparrow$  and $H$ is a subgroup of $G$ with
\[ \left| n H\right|=|H|, \mbox{ for all }n\in \N.\]
\end{lemma}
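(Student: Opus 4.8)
The plan is to split off a maximal bounded direct summand whose "shape" is controlled and then check that what remains has the stated cardinal property. First I would recall the structure theory: writing $B$ for a basic subgroup of the torsion part $t(G)$, one has $t(G)/B$ divisible and $B=\bigoplus_{p^k\in\P^\uparrow}\bigoplus_{\mu(p^k)}\Z(p^k)$ for suitable cardinals $\mu(p^k)$. The idea is to keep only finitely many of these $\Z(p^k)$-summands — say the ones contributing "most" of the cardinality — and to absorb the rest. More precisely, I would argue by contradiction for the cardinal statement: suppose no decomposition as claimed exists. Then for every finite $\P_0^\uparrow\subset\P^\uparrow$ and every splitting $G=\left(\bigoplus_{p^k\in\P_0^\uparrow}\bigoplus_{\gamma(p^k)}\Z(p^k)\right)\oplus H$, there is some $n\in\N$ with $|nH|<|H|$. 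The aim is to derive a contradiction by showing this forces the cardinalities $|nG|$ to descend strictly infinitely often, which is impossible.

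The key reduction: for a fixed $n\in\N$, $nG$ differs from $G$ only in the torsion part, and in fact $G/nG$ is bounded (it is a direct sum of cyclic groups of orders dividing $n$), so $G/nG\cong\bigoplus_{p^k\mid n}\bigoplus_{\delta(p^k)}\Z(p^k)$ up to the contribution of $r_0(G)$ copies of $\Z(n)$-type factors coming from the torsion-free part. The point is that the passage $G\rightsquigarrow nG$ can only strictly decrease cardinality if $|G|$ is carried by the bounded torsion summands associated to primes dividing $n$, i.e. if $\sum_{p^k\mid n}\gamma(p^k)=|G|>|H|$. Now one invokes that $\P^\uparrow$ is countable, hence there are only countably many such "dangerous" prime powers, and among all finite subsets one can choose $\P_0^\uparrow$ to be exactly the set of those $p^k$ with $\gamma(p^k)=|G|$ (there are only finitely many, since a cardinal cannot be an $\omega$-sum of strictly smaller infinite cardinals more than finitely — more carefully, one takes $\P_0^\uparrow=\{p^k:\gamma(p^k)\ge\lambda\}$ for an appropriate threshold $\lambda<|G|$ when $|G|$ has uncountable cofinality, and handles the countable-cofinality case separately). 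Splitting these off into the first summand, the complement $H$ then has $|nH|=|H|$ for every $n$, because each $G/nG$-contribution has now been reduced below $|H|$ or kept inside $H$'s torsion-free part (which satisfies $|nH|=|H|$ automatically as multiplication by $n$ is injective modulo bounded torsion).

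The main obstacle I anticipate is the cofinality bookkeeping: making precise the claim that only finitely many prime powers $p^k$ can have $\gamma(p^k)$ "as large as" $|G|$ in the relevant sense, and that the leftover summands can genuinely be absorbed into an $H$ with $|nH|=|H|$ rather than merely $|nH|\le|H|$. When $\cf(|G|)>\omega$ this is clean — a countable union of sets each of size $<|G|$ has size $<|G|$, so $|G|$ is realized on finitely many coordinates. When $\cf(|G|)=\omega$ one must be more careful: here one uses that, for the cardinal identity to \emph{fail} in $H$, some $nH$ must be strictly smaller, which pins the "weight" of $H$ onto bounded torsion for primes dividing $n$; choosing $\P_0^\uparrow$ to exhaust the finitely many prime powers where the partial sums first reach each stage of a cofinal $\omega$-sequence in $|G|$ then removes this possibility. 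I would structure the write-up so that this case analysis is the only delicate point, with the algebra (basic subgroups, boundedness of $G/nG$) quoted from \cite{fuchs} and the rest routine.
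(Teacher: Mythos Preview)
Your plan has the right ingredients (basic subgroups, bounded summands split off as pure subgroups, the identity $|H|=\sum_p|H_p|+r_0(H)$), but the mechanism you propose for choosing $\P_0^\uparrow$ is not correct. Taking $\P_0^\uparrow=\{p^k:\gamma(p^k)=|G|\}$, or more generally $\{p^k:\gamma(p^k)\ge\lambda\}$ for a single threshold $\lambda$, fails already on simple examples. Consider $G=\bigoplus_{\aleph_2}\Z(2)\oplus\bigoplus_{\aleph_1}\Z(3)$: here only $\gamma(2)=|G|$, so your rule gives $\P_0^\uparrow=\{2\}$ and $H=\bigoplus_{\aleph_1}\Z(3)$, but then $|3H|=0\neq|H|$. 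No single threshold $\lambda$ fixes this; the problem recurs at each new cardinal level. Your cofinality case split and the vague ``forces $|nG|$ to descend strictly infinitely often'' do not address this, since the obstruction lives inside $H$, not in $|G|$.

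What the paper does instead is \emph{iterate}: look at the set $\mathcal{D}=\{|B_{p,n}|:p^n\in\P^\uparrow\}$ of basic-subgroup ranks, and as long as $\max\mathcal{D}$ exists and is attained by only finitely many $p^n$, strip those off and repeat. Well-ordering of cardinals guarantees this terminates after finitely many steps, leaving a residual $H$ whose basic ranks either have no maximum or attain their supremum infinitely often. In either case, multiplication by any $n$ kills only the finitely many summands with $p^k\mid n$, which cannot lower the supremum; this is what gives $|nB_{p,H}|=|B_{p,H}|$ and hence (via purity and divisibility of the quotient) $|nH_p|=|H_p|$ for every $p$. The missing idea in your plan is precisely this iterative removal and the resulting structural dichotomy on the remaining ranks; once you have it, the contradiction framework and the cofinality bookkeeping become unnecessary.
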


\begin{proof}
Decompose $t(G)=\bigoplus_p G_p$ as a direct sum of $p$-groups
$G_p$ and let $B_p$ denote a basic subgroup of $G_p$ for each
$p$. This in particular means that $B_p$ is a direct sum
of cyclic $p$-groups,
\[ B_p = \bigoplus_{n < \omega} B_{p,n} \mbox{  with }
B_{p,n} \cong \bigoplus_{\beta_{p^n}}\tb Z(p^n)\] and that $G_p/B_p$
is divisible.
%
%
Define  $\mathcal{D}=\{|B_{p,n}|\colon p^n \in
\mathbb{P}^\uparrow\}$. If $\mathcal{D}$  has no maximum or
$\beta_0=\max \mathcal{D}$ is attained at an infinite number of
$|B_{p,n}|$'s we stop here. If, otherwise, $\beta_0=\max
\mathcal{D}=|B_{p_1,n_1}|=\ldots=|B_{p_r,n_r}|$ and
$|B_{p_j,n_j}|<\beta_0$ for all the remaining  $p_j^{n_j}\in
\P^\uparrow$ we  repeat the process with the set
$\mathcal{D}\setminus |B_{p_1,n_1}|$. After a finite number of steps
we obtain in this manner a  finite collection of cardinals $F\subset
\mathcal{D}$ such that either:
\begin{enumerate}
\item \emph{Case 1:}
the supremum $\beta:=\sup\left( \mathcal{D}\setminus F\right)$  is
not attained, or
\item \emph{Case 2:}
the supremum $\beta:=\sup \left(\mathcal{D}\setminus F\right)$ is
attained infinitely often,  i.e., there is an infinite subset
$I\subset \P^{\uparrow}$ with $|B_{p,n}|=\beta$ for all $p^n \in I$.
\end{enumerate}
Define  $\P_0^{\uparrow}=\{ p^n \in \P^\uparrow \colon  |B_{p,n}|\in
F\}$ (observe that $\P_0^\uparrow$ is necessarily  finite),  and set
$\gamma(p_k^{n_k})=|B_{p_k,n_k}|$ if $p_k^{n_k}\in \P_0^{\uparrow}$.
Since the subgroups $B_{p_k,n_k}$ are bounded pure subgroups,
there will be \cite[Theorem 27.5]{fuchs} a subgroup $H$ of $G$ such
that
\[ G=\left(\bigoplus_{p_k^{n_k}\in \P_0^{\uparrow}}
\bigoplus_{\gamma(p_k^{n_k})}B_{p_k,n_k}\right) \bigoplus H, \]

 For each prime $p$, consider a $p$-basic subgroup
$B_{p,H}=\oplus_{n} B_{p,n,H}$ of $H_p$, the $p$-part of $t(H)$, it
is immediately checked that either $B_{p,H}$ itself  (if $p\not \in
\P_0^\uparrow$) or $B_{p,H} \bigoplus \left(\bigoplus
_{\overset{p_k^{n_k}\in \P_0^{\uparrow}}{p_k=p}}
\bigoplus_{\gamma(p_k^{n_k})}B_{p_k,n_k}\right)$ (if $p \in
\P_0^\uparrow$) is  also $p$-basic in $G$.

Since different basic subgroups are necessarily isomorphic
\cite[Theorem 35]{fuchs}, we have that $B_{p,H}$ or $B_{p,H}
\bigoplus \left(\bigoplus_{\overset{p_k^{n_k}\in
\P_0^{\uparrow}}{p_k=p}}
\bigoplus_{\gamma(p_k^{n_k})}B_{p_k,n_k}\right)$ is isomorphic to
$B_p$. We have therefore that, for each $p$, either $\sup
|B_{p,n,H}|$ is not attained (case 1 above) or   attained at
infinitely many $p^n$'s (case 2).

Let now $n$ be any natural number. Then $|n
B_{p_k,n_k,H}|=|B_{p_k,n_k,H}|$ unless $p_k^{n_k}$ divides $n$.
Since this will only happen for finitely $p_k^{n_k}$'s, we conclude,
in both cases 1 and 2 that  $|nB_{p,H}|=|B_{p,H}|$.

Using that $B_{p,H}$ is pure in  $H_p$ and that $H_p/B_{p,H}$ is
 divisible we have that,
\begin{align*}
|nH_p|&=\left|\frac{nH_p}{nB_{p,H}}\right|+\left|nB_{p,H}\right| \\
&=
\left|n\left(\frac{H_p}{B_{p,H}}\right)\right|+\left|B_{p,H}\right|\\
&=
\left|\frac{H_p}{B_{p,H}}\right|+\left|B_{p,H}\right|=|H_p|.\end{align*}
Since $ |H|=\sum_p H_p + r_0(H)|$ for every infinite group $H$ and
$r_0(nH)=r_0(H)$ we have finally that $|H|=|nH|$, for every $n\in
\Z$.
\end{proof}

The terminology  introduced in the next definition is motivated,  in
the present context, by Theorem \ref{estrpseu} below.
\begin{definition} \label{def:split}If $G$ is an Abelian group, the set
$\P_0^\uparrow$ of  Lemma \ref{descomp} can be partitioned as
$\P_0^\uparrow=\P_1^\uparrow\cup\P_2^\uparrow$ with $p_i^{n_i}\in
\P_1^\uparrow$ if, and only if, $\gamma(p_i^{n_i})> r_0(G)$.

The cardinal numbers $\gamma(p_i^{n_i})$ with $p_i^{n_i} \in
\P_1^\uparrow$ will be called the \emph{dominant ranks} of $G$.
\end{definition}
\begin{lemma}
  \label{estrdikgio}
If $G$ is a nontorsion pseudocompact group, then there is a positive
integer such that:
 \begin{equation}\label{ineqwd}m(w(nG))\leq r_0(nG)\leq 2^{w(nG)}.\end{equation}
\end{lemma}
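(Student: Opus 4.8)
The plan is to handle the two inequalities by quite different means: the right-hand one holds for \emph{every} $n\ge 1$ almost for free, while the left-hand one is the substantial part and is obtained, with the help of Lemma~\ref{descomp}, for $n$ equal to a suitable product of prime powers.

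First I would record what multiplication by $n$ does, for arbitrary $n\ge 1$. Since $G$ is pseudocompact it is totally bounded; let $K=\overline G$ be its compact completion, so $w(G)=w(K)$. Multiplication by $n$ is a continuous surjective homomorphism of $K$ onto the closed subgroup $nK$, hence open onto $nK$; as preimages of $\gd$-sets under an open continuous map are $\gd$-sets and $G$ is $\gd$-dense in $K$, the image $nG$ is $\gd$-dense in $nK$. Thus $nG$ is pseudocompact, $\overline{nG}=nK$, and $w(nG)=w(nK)$; moreover $nG$ is infinite, for it contains $\langle nx\rangle\cong\Z$ for any $x\in G$ of infinite order, so $w(nG)\ge\omega$; and $r_0(nG)=r_0(G)$ for every $n$, since the kernel of multiplication by $n$ is torsion (tensor with $\Q$). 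The right-hand inequality is now immediate and uniform in $n$: $nK$ is an infinite compact group, so $|nK|=2^{w(nK)}$ and hence $r_0(nG)\le|nG|\le|nK|=2^{w(nG)}$.

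For the left-hand inequality I would first note that $r_0(G)\ge\cc$: if $x\in G$ has infinite order then $M:=\overline{\langle x\rangle}$ is an infinite compact \emph{non-torsion} group, its character group is therefore an unbounded subgroup of $\T$, and a short case analysis (a summand $\Z(p^\infty)$ in $t(M^\wedge)$ produces a copy of $\Z_p\subseteq M$; an element of infinite order in $M^\wedge$ produces a continuous surjection $M\twoheadrightarrow\T$; if $M^\wedge$ is torsion then $t(M)$ is countable and $|M|=\cc$) shows $r_0(M)\ge\cc$, whence $r_0(G)\ge r_0(M)\ge\cc$. Next I would invoke Lemma~\ref{descomp} to write $G=\bigl(\bigoplus_{p^k\in\P_0^\uparrow}\bigoplus_{\gamma(p^k)}\Z(p^k)\bigr)\oplus H$ with $\P_0^\uparrow$ finite and $|\ell H|=|H|$ for every $\ell\in\N$. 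Setting $N=\prod_{p^k\in\P_0^\uparrow}p^k$, the first summand is annihilated, so $NG=NH\subseteq H$ and $|NG|=|NH|=|H|$; by the definition of $m$ this gives $m(w(NG))\le|NG|=|H|$. Hence the left-hand inequality for $n=N$, and with it the whole lemma, will follow once I establish that $|H|=r_0(G)$.

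Since $H$ is infinite with $r_0(H)=r_0(G)\ge\cc$, proving $|H|=r_0(G)$ reduces to showing $|H_p|\le r_0(G)$ for every prime $p$, where $H_p$ is the $p$-component of $t(H)$. This is precisely where the conclusion of the proof of Lemma~\ref{descomp} must be used: for every $p$ the group $H_p$ admits a basic subgroup $\bigoplus_mB_{p,m,H}$ with $B_{p,m,H}\cong\bigoplus_{\beta_{p^m}}\Z(p^m)$ whose ranks $\beta_{p^m}$ have a supremum that is either not attained or attained at infinitely many $m$ — that is, $H_p$ is ``genuinely unbounded'', the huge bounded pieces having been absorbed into the first summand. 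The idea is that such an $H_p$, spread over $\omega$ layers of sizes cofinal in $|H_p|$, forces $|H_p|$-many $\Z$-independent elements of infinite order inside $G$: one produces them as suitable limits, in the compact group $NK$, of partial sums of elements of $NH_p$, uses the $\gd$-density of $NG$ in $NK$ to keep them in $NG$, and passes to a torsion-free quotient to secure independence; hence $r_0(G)\ge|H_p|$. I expect this last step — converting the unboundedness of the basic subgroup of $H_p$ into torsion-free rank \emph{inside $G$}, rather than merely inside the completion $K$ — to be the main obstacle; once it is in hand, the remainder is routine manipulation of $\gd$-density, duality of compact groups, and Lemma~\ref{descomp}.
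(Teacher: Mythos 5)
Your handling of the right-hand inequality is correct (and uniform in $n$), but the left-hand inequality --- which you rightly identify as the substantial part --- rests on the claim that $|H|=r_0(G)$, equivalently that $|H_p|\le r_0(G)$ for every prime $p$, and this claim is false. Take $\sigma=2^\cc$, so that $m(\sigma)\le(\log\sigma)^\omega\le\cc^\omega=\cc$; let $K=\prod_{n<\omega}\Z(p^n)^{\sigma}$, let $D$ be a $\gd$-dense subgroup of $K$ with $|D|=m(\sigma)\le\cc$, let $T=\bigoplus_{n<\omega}\bigoplus_{\sigma}\Z(p^n)\subseteq K$, and put $G=D+T$. Then $G$ is $\gd$-dense in $K$, hence pseudocompact and nontorsion, and since $G$ is a quotient of $D\oplus T$ we get $r_0(G)\le r_0(D)+r_0(T)=r_0(D)\le|D|\le\cc$, while $|NT|=2^\cc$ for every $N\in\N$. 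Whatever decomposition Lemma~\ref{descomp} produces for this $G$, your own identity $NG=NH$ forces $|H|\ge|NG|\ge|NT|=2^\cc>\cc\ge r_0(G)$. The moral is that the ``genuine unboundedness'' of the basic subgroups of $H$ guaranteed by Lemma~\ref{descomp} yields exactly $|nH|=|H|$ for all $n$ and nothing more: a huge unbounded direct sum of cyclic $p$-groups can sit inside a pseudocompact group as inert torsion, contributing nothing to $\gd$-density and nothing to torsion-free rank, so your bound $m(w(NG))\le|NG|$ cannot be upgraded to $m(w(NG))\le r_0(NG)$ by this route. (A second, smaller, gap: to get $r_0(G)\ge\cc$ you pass from $r_0(M)\ge\cc$ to $r_0(G)\ge r_0(M)$, but $M=\overline{\langle x\rangle}$ is the closure in the completion and need not be contained in $G$; $G$ is only known to contain the rank-one group $\langle x\rangle$.)

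What is genuinely needed is a lower bound on the torsion-free rank itself, and this is exactly where the paper leans on external machinery rather than on Lemma~\ref{descomp}: if $nG$ is metrizable for some $n$ then $nG$ is a compact metrizable nontorsion group, so $r_0(nG)=\cc=m(\omega)$ and \eqref{ineqwd} holds; otherwise $G$ is nonsingular in the sense of Dikranjan and Giordano Bruno, and Lemma~3.3 together with Theorem~1.15 of \cite{dikrgior08} produce an $n$ for which $r_0(nG)$ is itself the cardinality of a pseudocompact group of weight $w(nG)$, which gives \eqref{ineqwd} at once. I do not see how to replace that input by an argument based only on the decomposition of Lemma~\ref{descomp}, and the example above shows that your proposed replacement cannot work as stated.
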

\begin{proof}
  If $nG$ is metrizable for some $n\in \N$, then $nG$ is a
  compact metrizable group. Therefore $r_0(nG)= \cc$ and
  the inequalities in \eqref{ineqwd} hold for this $n$.

If $nG$ is not metrizable for any $n\in \N$, then $G$ is, in the
terminology of \cite{dikrgior08}, \emph{nonsingular}. Combining
Lemma 3.3 and Theorem 1.15 of \cite{dikrgior08}, there must be $n\in
N$ such that $r_0(nG)$  is the cardinal of a pseudocompact group of
weight $w(nG)$. Therefore
 \[m(w(nG))\leq r_0(nG)\leq 2^{w(nG)}.\]
\end{proof}
\begin{theorem}\label{estrpseu}
Let $G$ be an Abelian group. If $G$ admits a pseudocompact group
topology, then $G$ can be decomposed as
\[ G=
\left(\bigoplus_{ p^k \in \P_1^\uparrow}  \bigoplus_{\gamma(p^k)}
\Z(p^k)\right) \oplus G_0\] where $\gamma(p_i^{k_i})$, $p_i^{k_i} \in
\P_1^\uparrow$, are the dominant ranks of $G$ and there
is a cardinal $\omega_d(G)$  such that
\begin{equation}
  \label{eq:estr}
  m(\omega_d(G))\leq r_0(G)\leq |G_0|\leq 2^{\omega_d(G)}.
\end{equation}
\end{theorem}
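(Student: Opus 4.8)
The plan is to feed the algebraic decomposition of Lemma~\ref{descomp} into the cardinal inequality of Lemma~\ref{estrdikgio}, after fixing on $G$ a pseudocompact group topology (which one is irrelevant, the conclusion being purely algebraic). The case in which $G$ is torsion is elementary: such a group has bounded exponent, so Lemma~\ref{descomp} presents it, modulo a finite subgroup, as a finite direct sum of bounded homogeneous components, which are then all dominant and exhaust $G$; I would dispose of it separately and assume henceforth that $G$ is nontorsion.

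First I would apply Lemma~\ref{descomp} and partition $\P_0^\uparrow=\P_1^\uparrow\cup\P_2^\uparrow$ according to Definition~\ref{def:split}, obtaining
\[
G=\left(\bigoplus_{p^k\in\P_1^\uparrow}\bigoplus_{\gamma(p^k)}\Z(p^k)\right)\oplus T_2\oplus H,
\qquad T_2:=\bigoplus_{p^k\in\P_2^\uparrow}\bigoplus_{\gamma(p^k)}\Z(p^k),
\]
where $\P_1^\uparrow$ and $\P_2^\uparrow$ are finite, $\gamma(p^k)\le r_0(G)$ for every $p^k\in\P_2^\uparrow$, and $|nH|=|H|$ for all $n\in\N$. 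Putting $G_0:=T_2\oplus H$ gives the decomposition asserted in the statement, and the $\gamma(p^k)$ with $p^k\in\P_1^\uparrow$ are, by construction, exactly the dominant ranks of $G$; it only remains to produce a cardinal $\omega_d(G)$ verifying \eqref{eq:estr}.

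The middle inequality of \eqref{eq:estr} comes for free: the first summand is torsion, so $r_0(G)=r_0(G_0)\le|G_0|$. For the rest, I would invoke Lemma~\ref{estrdikgio} to obtain $n_0\in\N$ with $m(w(n_0G))\le r_0(n_0G)\le 2^{w(n_0G)}$, noting that $r_0(n_0G)=r_0(G)$ because $x\mapsto n_0x$ is a surjection of $G$ onto $n_0G$ with torsion kernel. I would then set $\omega_d(G):=w(n_0G)$, so that $m(\omega_d(G))\le r_0(G)$ is immediate. For the upper bound, $n_0G$ with the subspace topology from $G$ is a Hausdorff group, hence $|n_0G|\le 2^{w(n_0G)}$; since $n_0H\subseteq n_0G$ and $|n_0H|=|H|$, this forces $|H|\le 2^{w(n_0G)}$. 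Moreover $r_0(G)$ is infinite: $G$ is nontorsion, so $r_0(n_0G)=r_0(G)\ge 1$, $n_0G$ is infinite, and therefore $r_0(G)=r_0(n_0G)\ge m(w(n_0G))\ge\log(w(n_0G))\ge\omega$ by the bound $\log(\alpha)\le m(\alpha)$. Consequently, $\P_2^\uparrow$ being finite and each summand $\bigoplus_{\gamma(p^k)}\Z(p^k)$ with $p^k\in\P_2^\uparrow$ having cardinality $\le\max(\omega,\gamma(p^k))\le r_0(G)$, we get $|T_2|\le r_0(G)\le 2^{w(n_0G)}$. Hence $|G_0|=\max(|T_2|,|H|)\le 2^{\omega_d(G)}$, which closes the chain \eqref{eq:estr}.

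The only step carrying real content is the choice of $\omega_d(G)$: one must pin down a single cardinal that simultaneously bounds $|G_0|$ from above and is, after applying $m$, bounded by $r_0(G)$ from below. This works precisely because of the invariance $|nH|=|H|$ isolated in Lemma~\ref{descomp}, which lets one push the potentially oversized group $H$ into the weight-controlled group $n_0G$ furnished by Lemma~\ref{estrdikgio}; the finitely many bounded components collected in $T_2$ contribute only $\le r_0(G)$ and cause no harm, and matching the dominant ranks with the leading summand is bookkeeping. I expect getting the two lemmas to speak about the same cardinal to be the main obstacle.
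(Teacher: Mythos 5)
Your proof is correct and follows essentially the same route as the paper: Lemma~\ref{descomp} plus Definition~\ref{def:split} to produce $G_0=T_2\oplus H$, then Lemma~\ref{estrdikgio} to manufacture the cardinal $\omega_d(G)$ as a weight $w(n_0G)$, with the invariance $|nH|=|H|$ doing the work in the upper bound. The only (harmless, arguably cleaner) divergence is that you apply Lemma~\ref{estrdikgio} to $G$ itself and bound $|H|=|n_0H|\le|n_0G|\le 2^{w(n_0G)}$, whereas the paper applies it to $G_0$ and argues $|G_0|=|nG_0|\le 2^{w(nG_0)}$.
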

\begin{proof}
Since every pseudocompact torsion group must be of bounded order,
the theorem is trivial (and vacuous) for such groups, we may assume
that $G$ is nontorsion.

 Decompose $G$ as in Lemma \ref{descomp}:
\[\left( \bigoplus_{p^k \in \P_0^\uparrow} \bigoplus_{\gamma(p^k)}\Z(p^k) \right)\bigoplus H  \]
with $\P_0^{\uparrow}$ a finite subset of $\P^\uparrow$ and
\[ \left| n H\right|=|H| \mbox{ for all }n\in \N.\]

Split $\P_0^\uparrow=\P_1^\uparrow\cup \P_2^\uparrow$ as in
Definition \ref{def:split} and define
\[
G_0=\bigoplus_{p_i^{k_i} \in \P_2^\uparrow}
\bigoplus_{\gamma(p_i^{k_i})}\Z(p_i^{k_i})\bigoplus H.
\]
We will prove that the inequalities \ref{eq:estr} hold for
$w_d(G)=w(nG_0)$.

Lemma  \ref{estrdikgio}  proves   that there is  some $n\in \N$ with
 \begin{equation}\label{inwd} m(w(nG_0))\leq r_0(G_0)\leq
 2^{w(nG_0)}.\end{equation}
If $|G_0|=\gamma(p_i^{k_i})$ for some $p_i^{k_i} \in \P_2^\uparrow$,
it follows from the definition of $P_2^\uparrow$ that $|G_0|=
r_0(G)$ and \eqref{eq:estr} is deduced from \eqref{inwd}. If,
otherwise, $|G_0|=|H|$, then $|nG_0|\geq |nH|=|H|= |G_0|$ and we
deduce that $|G_0|=|nG_0|$ and thus that $|G_0|\leq 2^{w(nG_0)}$.
This together with \eqref{inwd} gives again \eqref{eq:estr} with
$w_d(G)=w(nG_0)$.
\end{proof}
\begin{remark}
  The cardinal $w_d(G)$ used  in  Theorem
   \ref{estrpseu} is precisely the \emph{divisible weight} of $G$ that was   introduced and studied by Dikranjan and Giordano-Bruno \cite{dikrgior08}. We refer the reader to that paper  to get an idea of the important
   role played by the divisible weight in the structure of pseudocompact groups.
One of its applications (Theorem 1.19 loc. cit.) is  to prove that
  $r_0(G)$ is an admissible cardinal for every pseudocompact group $G$, a fact first proved by  Dikranjan and Shakhmatov in \cite{dikrshak09}.
\end{remark}
%
\section{Pseudocompact groups with property $\h$}

The results of the previous sections will be used here to obtain
sufficient  conditions for    the existence of pseudocompact group
topologies with property $\h$.
\begin{lemma}  \label{enumer}
Let $\pi \colon G_1\to G_2$ be a quotient homomorphism between two
Abelian  topological groups $G_1$ and $G_2$  and let $L$ be a
compact Abelian group. Assume that the following conditions hold:
\begin{enumerate}
\item $G_1$ contains a free $\gd$-dense
  subgroup $H_1$ such that $H_1$ and $\pi(H_1)$
are isomorphic and have property $\h$.
\item $G_1$ contains another free subgroup $H_2$ such that
$H_1\cap H_2=\{0\}$, $H_1+H_2$ and  $\pi(H_1+H_2)$ are isomorphic
and  have property $\h$.
\item $m(w(L))\leq |H_2|$.
\end{enumerate}
Under these conditions the product $G_1\times L$ contains
 a $\gd$-dense subgroup $\widetilde{H}$ such that both
$\widetilde{H}$ and   $\pi\left(p_1(\widetilde{H})\right)$ have
property $\h$, where
 $p_1\colon G_1\times L\to G_1$ denotes the first projection.
\end{lemma}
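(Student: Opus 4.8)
\emph{The approach.} The plan is to take $\widetilde H$ to be the graph, over the subgroup $H_1+H_2$ of $G_1$, of a homomorphism into $L$ with $\gd$-dense image: the extra room inside $H_2$ furnished by hypothesis (3) is spent ``filling up'' $L$, whereas $\gd$-density in the $G_1$-direction and property $\h$ are inherited from $H_1$ and from $H_1+H_2$. In detail: using (3) and the definition of $m(w(L))$, first fix a $\gd$-dense subgroup $D$ of $L$ with $|D|=m(w(L))\le|H_2|$ (take the subgroup generated by a $\gd$-dense subset of $L$ of cardinality $m(w(L))$). Since $H_2$ is free of rank $r_0(H_2)=|H_2|\ge|D|$, choose a surjective homomorphism $\phi\colon H_2\to D$. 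Writing each $x\in H_1+H_2=H_1\oplus H_2$ uniquely as $x=\rho_1(x)+\rho_2(x)$ with $\rho_i(x)\in H_i$, set $\psi(x)=(x,\phi(\rho_2(x)))$ and $\widetilde H=\psi(H_1+H_2)\subseteq G_1\times L$. Then $\psi$ is a homomorphism, so $\widetilde H$ is a subgroup, and since $H_1\cap H_2=\{0\}$ the first projection $p_1$ is a group isomorphism of $\widetilde H$ onto $H_1+H_2$; in particular $p_1(\widetilde H)=H_1+H_2$, so $\pi(p_1(\widetilde H))=\pi(H_1+H_2)$, which has property $\h$ by hypothesis (2). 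This already settles the assertion about $\pi(p_1(\widetilde H))$.

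\emph{$\gd$-density.} To see that $\widetilde H$ is $\gd$-dense in $G_1\times L$, let $W$ be a nonempty $\gd$-subset of $G_1\times L$ and $(a,b)\in W$; write $W=\bigcap_{n<\omega}W_n$ with each $W_n$ open, pick basic open rectangles with $(a,b)\in O_n\times P_n\subseteq W_n$, and set $U=\bigcap_n O_n$, $V=\bigcap_n P_n$, so that $(a,b)\in U\times V\subseteq W$ with $U$ a $\gd$-subset of $G_1$ and $V$ a $\gd$-subset of $L$. Since $\phi(H_2)=D$ is $\gd$-dense in $L$, choose $h_2\in H_2$ with $\phi(h_2)\in V$; since $H_1$ is $\gd$-dense in $G_1$ by (1) and $U-h_2$ is a nonempty $\gd$-subset of $G_1$, choose $h_1\in H_1\cap(U-h_2)$. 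Then $\psi(h_1+h_2)=(h_1+h_2,\phi(h_2))$ lies in $(U\times V)\cap\widetilde H\subseteq W$.

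\emph{Property $\h$, and the main obstacle.} Let $N$ be a countable subgroup of $\widetilde H$ and $\chi\colon N\to\T$ a character. Then $N'=p_1(N)$ is a countable subgroup of $H_1+H_2$ with $N=\psi(N')$, and $\chi\circ\psi$ is a character of $N'$. By (2) the group $H_1+H_2$ has property $\h$, so $\chi\circ\psi$ extends to a continuous character of $H_1+H_2$; since $H_1+H_2$ is dense in $G_1$, this extends further to a continuous character $\Xi$ of $G_1$. Then $\Xi\circ p_1\colon G_1\times L\to\T$ is a continuous character whose restriction to $N$ is $\chi$ (because $p_1$ is injective on $\widetilde H$), so every countable subgroup of $\widetilde H$ is $h$-embedded in $G_1\times L$, i.e., $\widetilde H$ has property $\h$. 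The step I expect to require the most care is this final extension of the character up to $G_1$ (hence to $G_1\times L$): it rests on $H_1$ --- and therefore $H_1+H_2$ --- being \emph{dense}, not merely $\gd$-dense, in $G_1$, which in the situations where this lemma is applied holds because $G_1$ carries a base of $\gd$ open sets, together with the standard fact that a continuous character of a dense subgroup extends to the whole group ($\T$ being complete). The sole remaining loose end is the degenerate case in which $L$ is finite, dealt with directly by taking $\widetilde H=H_1\times L$ and invoking Proposition \ref{prop:h-prod}.
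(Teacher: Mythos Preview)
Your construction is essentially the paper's: both build $\widetilde H$ as a subgroup of $G_1\times L$ on which $p_1$ is a group isomorphism onto $H_1+H_2$, with the $L$-coordinates running through a $G_\delta$-dense subgroup of $L$ indexed by $H_2$. Your graph-of-a-homomorphism formulation (using the freeness of $H_2$ to produce $\phi$) is a clean way of phrasing what the paper does by enumeration, and your $G_\delta$-density argument is the standard one.

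Your ``main obstacle'' is not an obstacle, for two independent reasons. First, $G_\delta$-density \emph{always} implies density: every open set is a $G_\delta$-set, so a set meeting every nonempty $G_\delta$-set meets every nonempty open set. No extra hypothesis on $G_1$ is needed. Second, and more to the point, the extension to $G_1$ is unnecessary. Property~$\sharp$ of $\widetilde H$ asks only that each countable $N\le\widetilde H$ be $h$-embedded \emph{in $\widetilde H$}, not in $G_1\times L$. Once property~$\sharp$ of $H_1+H_2$ gives you a continuous character $\Xi'\colon H_1+H_2\to\T$ extending $\chi\circ\psi|_{N'}$, the composite $\Xi'\circ(p_1|_{\widetilde H})$ is already a continuous character on $\widetilde H$ extending $\chi$, because $p_1|_{\widetilde H}$ is continuous. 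This is exactly how the paper argues, packaging the step as an application of Lemma~\ref{lem:quotients}; there is no need to pass through $G_1$ or to invoke density/completeness.
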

\begin{proof}
We first enumerate the elements of $H_1$ and $H_2$ as
$H_1=\{f_\beta\colon \kappa <\beta\}$ and $H_2=\{ g_\eta \colon \eta
<\alpha\}$. Since $m(w(L))\leq \alpha=|H_2|$, we can also enumerate
 a $\gd$-dense subgroup $D$ of $L$ (allowing repetitions if
 necessary) as $D=\{d_\eta \colon \eta <\alpha\}$.
 We now define the subgroup $\widetilde{H}$ of $G_1\times L$ as
 \[ \widetilde{H}= \left\langle \,(f_\kappa + g_\eta,d_\eta)
 \colon \eta <\alpha, \kappa<\beta\,\right\rangle.\]
 It is  easy to check that $\widetilde{H}$ is a $\gd$-dense
 subgroup of $G_1\times L$ with $\widetilde{H}\cap \{0\}\times
 L=\{(0,0)\}$.

Since the homomorphism $p_1$ is continuous and establishes a group
isomorphism between $\widetilde{H}$ and $H_1+H_2$,  Lemma
\ref{lem:quotients} shows that
 $\widetilde{H}$  has
property~$\h$. The  same argument applies to the group   $\pi\left(
p_1(\widetilde{H})\right)=\pi(H_1+H_2)$.
\end{proof}
\begin{definition}
Let $\alpha\geq \omega$ be a cardinal. We say that \emph{$\alpha$
satisfies property~$(\ast)$ if:}
\begin{equation}
\tag{*}\label{*} \mbox{there is a  cardinal $\kappa$ with }
\kappa^\omega \leq \alpha\leq 2^\kappa
\end{equation}
 \end{definition}
Every cardinal $\alpha$ with  $\alpha^\omega = \alpha$ satisfies
property~\eqref{*}. This condition is equivalent to the condition
$(m(\alpha))^\omega \leq \alpha$.

To apply Lemma \ref{enumer} we need the following result:

\begin{theorem}[Theorem 4.5 of \cite{comfgali03}]\label{comfgali}
Let $G=(G,\mathcal{T}_1)$ be a pseudocompact Abelian group with
$w(G)=\alpha>\omega$, and set
$$\sigma=\min\{r_0(N):N \text{ is a closed $\gd$-subgroup of $G$}\}.$$
 If $\alpha^\omega\leq\sigma$ and
if $\lambda\geq\omega$ satisfies $m(\lambda)\leq\sigma$, then $G$
admits a pseudocompact group topology $\mathcal{T}_2$ such that
$w(G,\mathcal{T}_2)=\alpha+\lambda$ and
$\mathcal{T}_1\bigvee\mathcal{T}_2$ is pseudocompact. Moreover,
every closed $\gd$-subgroup of $(G,\mathcal{T}_1)$ is $\gd$-dense
$(G,\mathcal{T}_2)$.
\end{theorem}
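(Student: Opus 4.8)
The plan is to realise $\mathcal{T}_{2}$ as the initial topology on $G$ induced by a carefully built group monomorphism $\psi\colon G\to K'$ into a compact group $K'$ with $w(K')=\alpha+\lambda$. Embed $(G,\mathcal{T}_{1})$ $\gd$-densely into its completion $K$, a compact group with $w(K)=\alpha$. For a closed $\gd$-subgroup $S$ of $(G,\mathcal{T}_{1})$ the quotient $G/S$ is a $\gd$-dense metrizable subgroup of $K/\overline{S}^{K}$, which forces $K/\overline{S}^{K}$ to be metrizable; hence the closed $\gd$-subgroups of $(G,\mathcal{T}_{1})$ are precisely the groups $G\cap N$ with $N$ a closed $\gd$-subgroup of $K$, there are at most $\alpha^{\omega}$ of them, and $r_{0}(G\cap N)\geq\sigma$ for each by the very definition of $\sigma$. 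Keep also at hand the arithmetic: $\alpha\leq\alpha^{\omega}\leq\sigma$ gives $m(\alpha)\leq(\log\alpha)^{\omega}\leq\alpha^{\omega}\leq\sigma$, so with $m(\lambda)\leq\sigma$ one obtains $m(w(K'))=m(\alpha+\lambda)=\max\{m(\alpha),m(\lambda)\}\leq\sigma$, whence $\alpha^{\omega}\cdot m(\alpha+\lambda)\leq\sigma\leq r_{0}(G)$. Finally, since every nontrivial pseudocompact group has a proper closed $\gd$-subgroup (the kernel, intersected with $G$, of any nontrivial character of $K$), the ``moreover'' clause rules out $\mathcal{T}_{2}\supseteq\mathcal{T}_{1}$; thus $\psi$ cannot extend the inclusion $G\hookrightarrow K$, and a genuinely new embedding has to be produced.

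Everything then reduces to the assertion that \emph{there are a compact group $K'$ with $w(K')=\alpha+\lambda$ and a monomorphism $\psi\colon G\to K'$ such that $\psi(G\cap N)$ is $\gd$-dense in $K'$ for every closed $\gd$-subgroup $N$ of $K$}. Granting it, put $\mathcal{T}_{2}=\psi^{-1}(\text{topology of }K')$. Taking $N=K$ shows that $(G,\mathcal{T}_{2})=\psi(G)$ is $\gd$-dense in the compact group $K'$, so $(G,\mathcal{T}_{2})$ is pseudocompact with $w(G,\mathcal{T}_{2})=w(K')=\alpha+\lambda$. Each closed $\gd$-subgroup $G\cap N$ of $(G,\mathcal{T}_{1})$, being $\gd$-dense in $K'$, is a fortiori $\gd$-dense in the subgroup $(G,\mathcal{T}_{2})$ --- this is the ``moreover''. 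And $\mathcal{T}_{1}\vee\mathcal{T}_{2}$ is the topology carried by the diagonal $\delta\colon g\mapsto(g,\psi(g))$ into $K\times K'$: a nonempty $\gd$-set of $K\times K'$ contains a set $(x+N)\times(y+M)$ with $N,M$ closed $\gd$-subgroups of $K$ and $K'$; pick $g_{0}\in G\cap(x+N)$ by $\gd$-density of $G$ in $K$, then $h\in G\cap N$ with $\psi(g_{0}+h)\in y+M$ by $\gd$-density of $\psi(G\cap N)$ in $K'$; then $\delta(g_{0}+h)$ lands in the set, so $\delta(G)$ is $\gd$-dense in $K\times K'$ and $\mathcal{T}_{1}\vee\mathcal{T}_{2}$ is pseudocompact.

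To prove the assertion I would take $K'=\T^{\alpha+\lambda}$, so that $\psi$ becomes a family $(\chi_{\xi})_{\xi<\alpha+\lambda}$ of characters of $G$: injectivity of $\psi$ means this family separates points of $G$, and $\psi(G\cap N)$ is $\gd$-dense in $K'$ precisely when no nonzero finite integer combination of the $\chi_{\xi}$'s vanishes on $G\cap N$. The family would be built by a transfinite recursion of length $\alpha+\lambda$. At each stage one prescribes the next character on a finitely generated subgroup of $G$ so as to advance a pending point-separation request and, for the closed $\gd$-subgroup $N$ then under attention, to pin $\psi$ on finitely many fresh independent elements of $G\cap N$ --- such elements being available because $r_{0}(G\cap N)\geq\sigma\geq m(\alpha+\lambda)$ --- in the pattern of a prescribed independent $\gd$-dense configuration of $\T^{\alpha+\lambda}$ (configurations of the kind delivered by Lemma~\ref{room}, Corollary~\ref{cor:room} and Proposition~\ref{prop:freetorsion}); one then extends the character to $G$ using the divisibility of $\T$, subject to keeping the growing character group $\langle\chi_{\nu}:\nu\leq\xi\rangle$ away from each of the at most $\alpha^{\omega}$ ``forbidden'' subgroups $(G\cap N)^{\perp}=\ho(G/(G\cap N),\T)$. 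Injectivity of $\psi$ on the whole of $G$ is then obtained by running the construction on a maximal independent subset of $G$ and extending, such a subgroup being essential in $G$. Throughout, the inequalities $\alpha^{\omega}\leq\sigma$, $m(\lambda)\leq\sigma$ and $\alpha^{\omega}\cdot m(\alpha+\lambda)\leq\sigma\leq r_{0}(G)$ are what keep the recursion supplied.

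The principal obstacle is that a single $\psi$ must serve all the subgroups $G\cap N$ at once. There can be as many as $\alpha^{\omega}$ of them --- possibly more than $w(K')=\alpha+\lambda$ --- and each of their images must be $\gd$-dense in $K'$, so the requirements for different $N$ cannot be kept on disjoint blocks of coordinates; they are forced to overlap, and one must check that arranging density for one subgroup never creates a nonzero combination of the $\chi_{\xi}$'s vanishing on another. This is where the hypotheses are used to the hilt: $\alpha^{\omega}\leq\sigma$ both caps the number of obstructions $(G\cap N)^{\perp}$ to be circumvented and, together with $m(\lambda)\leq\sigma\leq r_{0}(G)$, ensures every $G\cap N$ is roomy enough to carry its part of the construction; the divisibility of $\T$ supplies the slack at each extension step; and holding on to the injectivity of $\psi$ (so that $\mathcal{T}_{2}$ is Hausdorff) while keeping $\mathcal{T}_{2}$ incomparable with $\mathcal{T}_{1}$ is the delicate accounting that has to be interleaved with all of the above.
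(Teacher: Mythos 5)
A preliminary remark: the paper does not prove this statement; it imports it verbatim as Theorem~4.5 of \cite{comfgali03}, so there is no internal proof to compare against. Judged on its own terms, your reduction (the first two paragraphs) is correct and is the natural one: realise $\mathcal{T}_2$ through a homomorphism $\psi\colon G\to K'$ with $w(K')=\alpha+\lambda$ such that $\psi(G\cap N)$ is $\gd$-dense in $K'$ for every closed $\gd$-subgroup $N$ of the completion $K$. Your identification of the closed $\gd$-subgroups of $(G,\mathcal{T}_1)$ as the traces $G\cap N$, the bound $\alpha^{\omega}$ on their number, the arithmetic $\alpha^{\omega}\cdot m(\alpha+\lambda)\leq\sigma$, and the diagonal argument for the pseudocompactness of $\mathcal{T}_1\vee\mathcal{T}_2$ are all sound.

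The gap is that the assertion to which everything is reduced --- the construction of $\psi$ --- is the whole content of the theorem, and the one precise claim you make about how to certify it is wrong. With $K'=\T^{\alpha+\lambda}$ and $\psi=(\chi_{\xi})_{\xi<\alpha+\lambda}$, the condition ``no nonzero finite integer combination of the $\chi_{\xi}$ vanishes on $G\cap N$'' characterizes \emph{density} of $\psi(G\cap N)$ in $\T^{\alpha+\lambda}$, not $\gd$-density; the latter requires that for every countable $J$ the map $(\chi_{\xi})_{\xi\in J}\colon G\cap N\to\T^{J}$ be \emph{surjective}, which is strictly stronger (a dense cyclic subgroup of $\T^{\omega}$ already witnesses the difference). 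Hence the bookkeeping you describe --- steering the growing character group away from the annihilators $(G\cap N)^{\perp}$ --- does not by itself deliver the ``moreover'' clause or even the pseudocompactness of $\mathcal{T}_2$. Relatedly, a recursion of length $\alpha+\lambda$ that prescribes each new character on only finitely many fresh elements cannot force $\psi$ to take prescribed values on the $\alpha^{\omega}\cdot m(\alpha+\lambda)$ designated witnesses inside the various $G\cap N$; the workable (and, in \cite{comfgali03}, the actual) organization is to fix in advance a free subgroup $F\leq G$ meeting each $G\cap N$ in rank at least $m(\alpha+\lambda)$, define \emph{all} $\alpha+\lambda$ characters simultaneously on $F$ so that each $\psi(F\cap N)$ contains a prescribed independent $\gd$-dense set of the kind produced by Lemma~\ref{room}, and only then extend each character to $G$ using divisibility of $\T$ --- while arranging, as you correctly sense but do not resolve, that this same family separates the points of $G$ even though it cannot contain $H_1$. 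As it stands, the proposal records the correct target and the correct numerology but leaves the construction, i.e.\ the theorem, unproved.
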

\begin{corollary}\label{cor:comfgali}
Let $\sigma,\alpha$ and $\lambda$ be cardinals with
$\alpha^\omega\leq \sigma$ and $m(\lambda)\leq \sigma$. If
 $H$ is a  free, dense  subgroup of $\T^\sigma$ with property $\h$ and
cardinality $\alpha$,
 then $\T^\sigma $ contains another subgroup
$H_2$ with $H\cap H_2=\{0\}$, $|H_2|=\lambda+\alpha$ and  such that
$H+H_2$ has property $\h$.
\end{corollary}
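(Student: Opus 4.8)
The plan is to realize the abstract hypotheses of Theorem \ref{comfgali} with the compact group $\T^\sigma$ and then transfer the conclusion across the obvious quotient map. First I would apply Theorem \ref{comfgali} to $G=(\T^\sigma,\mathcal{T}_1)$, where $\mathcal{T}_1$ is the usual product topology, so that $w(G)=\sigma$ and the invariant $\min\{r_0(N):N\text{ a closed }\gd\text{-subgroup}\}$ is exactly $\sigma$ (every closed $\gd$-subgroup of $\T^\sigma$ is itself topologically a product $\T^\sigma$ times a metrizable factor, so has torsion-free rank $\sigma$); note $\sigma^\omega=\sigma$ since $\alpha^\omega\le\sigma$ forces $\sigma\ge\cc$ and one checks $\sigma^\omega\le\sigma$ is automatic here, and $m(\lambda)\le\sigma$ is assumed. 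Taking $\lambda$ as in the statement, Theorem \ref{comfgali} produces a second pseudocompact topology $\mathcal{T}_2$ on $\T^\sigma$ with $w(\T^\sigma,\mathcal{T}_2)=\sigma+\lambda=\sigma$ and such that $\mathcal{T}_1\vee\mathcal{T}_2$ is pseudocompact; moreover every closed $\gd$-subgroup of $(\T^\sigma,\mathcal{T}_1)$ — in particular $\T^\sigma$ itself — is $\gd$-dense in $(\T^\sigma,\mathcal{T}_2)$.

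The next step is to manufacture $H_2$ inside a copy of $\T^\sigma$ using Proposition \ref{prop:freetorsion}. Apply that proposition with a surjective character $\chi\colon\T\to\T$ (the identity), $G=\T$, and cardinals $\sigma$ and $\alpha':=\lambda+\alpha$: since $m(\sigma)\le\alpha\le\alpha'$ and $(\alpha')^\omega=\lambda^\omega+\alpha^\omega\le\sigma$ (using $m(\lambda)\le\sigma$, hence $\lambda^\omega\le(\log\sigma)^\omega\cdot\ldots\le\sigma$, together with $\alpha^\omega\le\sigma$), the hypotheses are met and we obtain a free, independent, $\gd$-dense subset $F\subseteq\T^\sigma$ of cardinality $\alpha'$ whose generated subgroup $\langle F\rangle$ has property $\h$. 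We then want to place this copy so that $\langle F\rangle$ meets $H$ trivially and so that $H+\langle F\rangle$ still has property $\h$. The cleanest route is to work in $\T^\sigma\times\T^\sigma$, apply Proposition \ref{prop:h-prod} to see $H\times\langle F\rangle$ has property $\h$, then use an automorphism/shear of $\T^\sigma$ (or pass through $\T^{\sigma}\cong\T^\sigma\times\T^\sigma$) together with Lemma \ref{lem:quotients} to push $H\times\langle F\rangle$ into a single $\T^\sigma$ as a group of the form $H+H_2$ with $H\cap H_2=\{0\}$; the restriction of the relevant projection is an isomorphism precisely because $F$ and $H$ are independent, so Lemma \ref{lem:quotients} guarantees the image inherits the maximal totally bounded topology on each countable subgroup, i.e.\ has property $\h$. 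Setting $H_2$ to be the image of $\langle F\rangle$ gives $|H_2|=\alpha'=\lambda+\alpha$ as required.

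The main obstacle I anticipate is the bookkeeping needed to force the three compatibility conditions simultaneously: that $H_2$ be \emph{disjoint} from the \emph{given} $H$, that $H+H_2$ (not just $H\oplus H_2$ abstractly) carry the $\h$-property as a subspace of $\T^\sigma$, and that all of this live inside the \emph{same} $\T^\sigma$ rather than a product. The disjointness and the "sum has property $\h$" clauses must be arranged together, and the tool for both is an explicit embedding $\T^\sigma\times\T^\sigma\hookrightarrow\T^\sigma$ chosen so that it is injective on $H\oplus\langle F\rangle$ and so that the composite projections witness the hypotheses of Lemma \ref{lem:quotients}; independence of $F\cup H$ (a free set of the right cardinality can always be chosen independent over $H$ because $r_0(\T^\sigma)=2^\sigma\geq\alpha'$) is what makes the restriction map an isomorphism. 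Once that embedding is fixed, properties (1)–(3) follow formally, and the role of $\mathcal{T}_2$ from the first paragraph is only to certify—via the $\gd$-density clause—that $H_2$ can be chosen $\gd$-dense should that be needed downstream; for the statement as written only the algebraic and $\h$-theoretic conclusions are required, so the heart of the argument is the combination of Propositions \ref{prop:h-prod} and \ref{prop:freetorsion} with Lemma \ref{lem:quotients}.
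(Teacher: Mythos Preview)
Your plan has a genuine gap at several points, and it misses the key idea the paper uses.

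First, the cardinal arithmetic you need for Proposition~\ref{prop:freetorsion} with $\alpha'=\lambda+\alpha$ simply fails: the hypothesis is only $m(\lambda)\le\sigma$, and $\lambda$ may well exceed $\sigma$ (in the application inside Theorem~\ref{main} one takes $|H_2|=2^\kappa$ while $\sigma=r_0(G)\le 2^\kappa$). The chain ``$m(\lambda)\le\sigma$ hence $\lambda^\omega\le\sigma$'' is not valid, so you cannot produce a free $\gd$-dense set of size $\lambda+\alpha$ by that proposition.

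Second, and more seriously, your use of Lemma~\ref{lem:quotients} runs in the wrong direction. That lemma lifts the maximal totally bounded topology \emph{upward} along a continuous surjection $\pi\colon K\to L$: if the image $N\subset L$ already carries it, then so does the isomorphic preimage $M\subset K$. You want the opposite implication---from $H\times\langle F\rangle\subset \T^\sigma\times\T^\sigma$ down to $H+H_2\subset\T^\sigma$---and that is false in general. Nor can you repair this by using a topological isomorphism $\T^\sigma\times\T^\sigma\to\T^\sigma$: any such map sends $H\times\{0\}$ into a proper closed subgroup of $\T^\sigma$, whereas the \emph{given} $H$ is dense, so its image can never be the original $H$. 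Your first paragraph, applying Theorem~\ref{comfgali} to the compact group $\T^\sigma$ itself, does not help either, since that application never sees $H$ and the resulting $\mathcal T_2$ has no relation to it.

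The idea you are missing is the duality in Theorem~\ref{hernmaca}: property~$\h$ on the character side is exactly pseudocompactness on the group side. View $H\subset\T^\sigma=(F(\sigma)_d)^\wedge$ as the group of characters inducing a totally bounded topology $\mathcal T_H$ on the free group $F(\sigma)$; because $H$ is dense and has property~$\h$, $(F(\sigma),\mathcal T_H)$ is pseudocompact of weight $\alpha$, and its closed $\gd$-subgroups have torsion-free rank $\sigma$. Now apply Theorem~\ref{comfgali} to \emph{this} group (not to $\T^\sigma$) with the given $\lambda$: you obtain $\mathcal T_{H_2}$ with $|H_2|=\alpha+\lambda$ such that $\mathcal T_H\vee\mathcal T_{H_2}=\mathcal T_{H+H_2}$ is pseudocompact, and Theorem~\ref{hernmaca} immediately gives that $H+H_2$ has property~$\h$. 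The clause ``every closed $\gd$-subgroup of $(G,\mathcal T_1)$ is $\gd$-dense in $(G,\mathcal T_2)$'' forces $H\cap H_2=\{0\}$. No product construction, shear, or independence bookkeeping is needed.
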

\begin{proof}
Let $F(\sigma)$ denote the free Abelian group of rank $\sigma$. We
apply Theorem~\ref{comfgali} to the pseudocompact group
$(F(\sigma),\cth)$ defined  by $H$.
 We  obtain thus a pseudocompact
topology $\mathcal{T}_{_{H_2}}$ on $F(\sigma)$ induced by a subgroup
$H_2$ of $\T^\sigma$ of cardinality $|H_2|=\alpha+\lambda$ such that
$\cth \bigvee \mathcal{T}_{_{H_2}}=\mathcal{T}_{_{H+H_2}}$ is
pseudocompact. By Theorem \ref{hernmaca} the subgroup $H+H_2$ has
property $\h$ and, since closed $\gd$-subgroups of $\cth$ are
$\gd$-dense in $\mathcal{T}_{_{H_2}}$, we  also have that $H\cap
H_2=\{0\}$.\end{proof}
\begin{theorem}\label{main}
Let $G$ be  a pseudocompact Abelian group with dominant ranks
$\gamma(p_1^{n_1}),\ldots, \gamma(p_k^{n_k})$ and suppose that
$\gamma(p_i^{n_i})$, $1\leq i\leq k$, satisfy property~\eqref{*}. If
 $r_0(G)$ also satisfies property \eqref{*} for some $\kappa$ with  $m(|G_0|)\leq 2^\kappa$,
     then $G$ admits a
pseudocompact topology with property $\h$.
\end{theorem}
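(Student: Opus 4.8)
The plan is to assemble the pseudocompact topology with property~$\h$ on $G$ out of the pieces produced in Sections~3 and~5, glued via Lemma~\ref{enumer}. First I would apply Theorem~\ref{estrpseu} to write $G = \left(\bigoplus_{p^k\in\P_1^\uparrow}\bigoplus_{\gamma(p^k)}\Z(p^k)\right)\oplus G_0$, where the $\gamma(p_i^{n_i})$ are the dominant ranks and $m(\omega_d(G))\le r_0(G)\le |G_0|\le 2^{\omega_d(G)}$. By Proposition~\ref{prop:h-prod} it suffices to put a pseudocompact topology with property~$\h$ on each summand separately. For each $p_i^{n_i}\in\P_1^\uparrow$, the summand is $\bigoplus_{\gamma(p_i^{n_i})}\Z(p_i^{n_i})$; since $\gamma(p_i^{n_i})$ satisfies~\eqref{*}, pick $\kappa_i$ with $\kappa_i^\omega\le\gamma(p_i^{n_i})\le 2^{\kappa_i}$, set $\sigma_i=\kappa_i$ and $\alpha_i=\gamma(p_i^{n_i})$ (so $m(\sigma_i)\le\alpha_i$ and $\alpha_i^\omega\le\sigma_i$), and invoke Proposition~\ref{prop:boundedtorsion}: $\Z(p_i^{n_i})^{\sigma_i}$ contains an independent $\gd$-dense subgroup $H_i$ with property~$\h$, necessarily of cardinality $\alpha_i=\gamma(p_i^{n_i})$ (independence plus $\gd$-density force $r_0$ or the $p_i^{n_i}$-rank to equal the number of generators). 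Endowing $\bigoplus_{\gamma(p_i^{n_i})}\Z(p_i^{n_i})$ with the topology induced by this embedding gives a pseudocompact (because $\gd$-dense in a compact group) group topology with property~$\h$ on that summand.

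The substantive part is $G_0$. Here I would use property~\eqref{*} for $r_0(G)$: fix $\kappa$ with $\kappa^\omega\le r_0(G)\le 2^\kappa$ and $m(|G_0|)\le 2^\kappa$, and set $\sigma=\kappa$, $\alpha=r_0(G)$. Applying Proposition~\ref{prop:freetorsion} to a surjective character $\chi\colon\T\to\T$ of the compact metrizable group $G=\T$ — or more precisely taking $G_1=\R/\Z$ with a suitable surjection onto $\T$, so that $\chi^\sigma$ is a quotient map $G_1^\sigma\to\T^\sigma$ — produces an independent $\gd$-dense subset $F$ of cardinality $\alpha=r_0(G)$ in $\T^\sigma$ (identifying the free object) such that $\langle F\rangle$ and $\chi^\sigma\langle F\rangle$ are isomorphic free groups of rank $r_0(G)$ with property~$\h$; call $H_1=\langle F\rangle$, a free dense (indeed $\gd$-dense) subgroup of $\T^\sigma$. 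Next apply Corollary~\ref{cor:comfgali} with $\lambda=|G_0|$ (legitimate since $m(\lambda)=m(|G_0|)\le 2^\kappa=2^\sigma$, and $\alpha^\omega\le\sigma$): we obtain $H_2\subset\T^\sigma$ with $H_1\cap H_2=\{0\}$, $|H_2|=\alpha+|G_0|=|G_0|$, and $H_1+H_2$ of property~$\h$. Now set $G_1=\R^\sigma$ (or the free group of rank $\sigma$ with the topology making $\chi^\sigma$ a quotient onto $\T^\sigma$), $G_2=\T^\sigma$, $\pi=\chi^\sigma$, $L$ a compact group of weight $\le |H_2|=|G_0|$ chosen so that the algebraic pieces add up correctly, and check hypotheses (1)–(3) of Lemma~\ref{enumer}: (1) from Proposition~\ref{prop:freetorsion}, (2) from Corollary~\ref{cor:comfgali}, and (3) is $m(w(L))\le |H_2|$, which holds by the choice of $L$. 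Lemma~\ref{enumer} then yields a $\gd$-dense subgroup $\widetilde H$ of $G_1\times L$ with $\widetilde H$ and $\pi(p_1(\widetilde H))$ of property~$\h$; the group $\pi(p_1(\widetilde H))\subset\T^\sigma\times L$ (a subgroup of a compact group, $\gd$-dense hence pseudocompact) is then, up to isomorphism, $G_0$ equipped with a pseudocompact topology with property~$\h$.

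The main obstacle — and the step deserving the most care — is the bookkeeping that matches the \emph{algebraic} isomorphism type. Lemma~\ref{enumer} delivers a group that is abstractly $H_1+H_2$ (inside $G_1$) with a copy of $L$ glued in through the $\gd$-dense $D\subset L$; I must choose $L$ and the free ranks so that the resulting abstract group is exactly $G_0$. Concretely, $G_0$ decomposes (Lemma~\ref{descomp}, applied to $G_0$) as a finite sum of bounded cyclic blocks of ranks $\le r_0(G)$ together with a subgroup satisfying $|nG_0|=|G_0|$; the free part $H_1+H_2$ accounts for the torsion-free rank $r_0(G_0)=r_0(G)$ and contributes cardinality $|G_0|$, while $L$ is taken to be a product $\prod_j \Z(q_j^{m_j})^{\lambda_j}\times\T^{\mu}$ whose torsion blocks realize the remaining bounded torsion of $G_0$ and whose weight is $\le|G_0|$ so that (3) holds; one then verifies $H_1+H_2\oplus (\text{torsion of }L)\cong G_0$ by comparing basic subgroups and torsion-free ranks, using that the dominant-rank blocks have been stripped off already so every surviving torsion block has rank $\le r_0(G_0)\le|H_1+H_2|$ and can be absorbed. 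Once this identification is in place, pseudocompactness is automatic ($\gd$-density in a compact group) and property~$\h$ is exactly what Lemma~\ref{enumer} hands back, so combining with Proposition~\ref{prop:h-prod} over all summands completes the proof.
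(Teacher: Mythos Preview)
Your overall plan---split off the dominant bounded summands, handle them by Proposition~\ref{prop:boundedtorsion}, and treat $G_0$ via Proposition~\ref{prop:freetorsion}, Corollary~\ref{cor:comfgali} and Lemma~\ref{enumer}---matches the paper. But the execution breaks down at two points, one arithmetical and one conceptual.

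\emph{Cardinal assignments.} In both the bounded case and the $G_0$ case you set $\sigma=\kappa$ and $\alpha=$ the relevant rank, and then claim $\alpha^\omega\le\sigma$. That is the wrong way round: property~\eqref{*} gives $\kappa^\omega\le r_0(G)$, hence $\kappa\le r_0(G)$, so $r_0(G)^\omega\le\kappa$ is generally false. The paper takes $\sigma=r_0(G)$ and $\alpha=\kappa^\omega$; then $m(\sigma)\le m(2^\kappa)\le\kappa^\omega=\alpha$ and $\alpha^\omega=\kappa^\omega\le r_0(G)=\sigma$ both hold. The same swap is needed for the bounded summands.

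\emph{The role of $\widetilde H$.} The deeper problem is your last paragraph: you try to arrange that the $\gd$-dense subgroup $\widetilde H$ produced by Lemma~\ref{enumer} is \emph{algebraically isomorphic to} $G_0$. This cannot work. By construction $p_1$ restricts to an isomorphism of $\widetilde H$ onto $H_1+H_2$, so $\widetilde H$ is always free; no choice of $L$ will make it carry the torsion of $G_0$. The paper does not attempt this at all. Instead it works on the \emph{dual} side: one takes $G_1=(D(F)_d)^\wedge=(\Q_d^\wedge)^{r_0(G)}$ (this is why the metrizable group in Proposition~\ref{prop:freetorsion} must be the solenoid $\Q_d^\wedge$, not $\T$ or $\R/\Z$), $L=(D(t(G_0))_d)^\wedge$, and $\pi=\chi^{r_0(G)}$ the dual of the inclusion $F\hookrightarrow D(F)$. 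Then $\widetilde H$ is a subgroup of the character group of $D(F)\oplus D(t(G_0))$, and it \emph{induces} a totally bounded topology $\mathcal T_{\widetilde H}$ on that group. The chain of embeddings
\[
F\;\hookrightarrow\; G_0\;\hookrightarrow\; D(F)\oplus D(t(G_0))
\]
is what carries the argument: $\mathcal T_{\widetilde H}$ has property~$\h$ (because $\widetilde H$ does, via Theorem~\ref{hernmaca}); its restriction to $F$ is pseudocompact (because $\chi^{r_0(G)}(p_1(\widetilde H))$ is $\gd$-dense with property~$\h$ in $\T^{r_0(G)}=(F_d)^\wedge$); and since $G_0$ is sandwiched between $F$ and $D(F)\oplus D(t(G_0))$, the restriction to $G_0$ is pseudocompact with property~$\h$ as well. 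No algebraic bookkeeping on $\widetilde H$ is required.
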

\begin{proof}
Decompose, following Theorem~\ref{estrpseu}, $G$ as a direct sum
\[G=\left(\bigoplus_{\gamma(p_1^{n_1})}\Z(p_1^{n_1})\bigoplus \cdots
\bigoplus_{\gamma(p_k^{n_k})}\Z(p_k^{n_k})\right) \bigoplus G_0\]

Let $F$ denote a free Abelian group of cardinality $r_0(G)$
contained in $G_0$ and denote by $D(F)$ and $D(t(G_0))$ divisible
hulls of $F$ and $t(G_0)$, respectively. There is then a chain of
group embeddings (here we use \cite[Lemmas 16.2 and 24.3]{fuchs})

\begin{equation}  \label{basicemb}
 F \overset{j_1}{\to} G_0 \overset{j_2}{\to} D(F)\oplus D(t(G_0))
\end{equation}

Denote by $\chi$  the quotient homomorphism obtained as the dual map
of the canonical embedding $\Z\to\Q$. Observe that identifying $F$
with $\oplus_{r_0(G)}\Z$ and $D(F)$ with $\oplus_{r_0(G)}\Q$, the
dual map of $j_2\circ j_1$ is exactly $\chi^{r_0(G)}$.

  Taking $\sigma=r_0(G)$, $G=\Q_d^\wedge$ and $\alpha=\kappa^{\omega}$, we can apply Proposition \ref{prop:freetorsion} to get a
$\gd$-dense   subgroup $H_1$ of
$\left(D(F)_d\right)^\wedge=\Bigl(\Q_d^\wedge\Bigr)^{r_0(G)}$ with $|H_1|=\kappa^\omega$
and such that $H_1$ and $\chi^{r_0(G)}(H_1)$ are isomorphic and have
property $\h$ (notice that $\kappa^\omega $ and $r_0(G)$ satisfy the
hypothesis of that Proposition).

We now apply Corollary \ref{cor:comfgali} to $\chi^{r_0(G)}(H_1)$ to
obtain another free  subgroup $H_2^\prime$ of $\T^{r_0(G)}$ with
$\chi^{r_0(G)}(H_1)\cap H_2^\prime=\{0\}$, $|H_2^\prime|=2^\kappa$
and such that $\chi^{r_0(G)}(H_1)+H_2^\prime$ has property $\h$. By
lifting (through $\chi^{r_0(G)}$) the free generators of
$H_2^\prime$ to $(D(F)_d)^\wedge$, we obtain a free subgroup $H_2$ of
$(D(F)_d)^\wedge$  such that $H_1\cap H_ 2=\{0\}$ and $|H_2|=
2^\kappa$. Clearly  $H_1+H_2$ is isomorphic to
$\chi^{r_0(G)}(H_1)+H_2^\prime$ and therefore $H_1+H_2$ has property
$\h$ by Lemma \ref{lem:quotients}.

We finally apply Lemma \ref{enumer}. The role of $G_1\times L$ is
played by $(D(F)_d)^\wedge\times \biggl(D(t(G_0))_d\biggr)^\wedge$; $G_2$ is here
identified with $\T^{r_0(G)}$ and $\pi$ is $\chi^{r_0(G)}$. Lemma~\ref{enumer}
then  provides a $\gd$-dense subgroup $\widetilde{H}$
  of  $\biggl(D(F)_d\biggr)^\wedge\times \biggl(D(t(G_0))_d\biggr)^\wedge$ such that
both $\widetilde{H}$ and   $\chi^{r_0(G)}(p_1(\widetilde{H}))$ have
property $\h$. This subgroup generates a pseudocompact topology
$\mathcal{T}_{_{\widetilde{H}}}$  on    $D(F)\oplus D(t(G_0))$
with property $\h$ that makes $F$ pseudocompact (the induced topology on $F$ is just the topology
inherited from $\chi^{r_0(G)}(p_1(\widetilde{H}))$). Since $G_0$ sits between $F$ and
$D(F)\oplus D(t(G_0))$, it follows that the
restriction of $\mathcal{T}_{_{\widetilde{H}}}$ to $G_0$ is
pseudocompact and has property~$\h$.

By Proposition \ref{prop:boundedtorsion} the bounded group
${\displaystyle \bigoplus_{\alpha(p_1^{n_1})}\Z(p_1^{n_1})\bigoplus
\cdots \bigoplus_{\alpha(p_k^{n_k})}\Z(p_k^{n_k})}$ also admits a
pseudocompact group topology with property $\h$ and the theorem
follows.
\end{proof}

Dikranjan and Shakmatov \cite{dikrshak05} prove under a
set-theoretic axiom called $\nabla_\kappa$ (that implies
$\cc=\omega_1$ and $2^\cc=\kappa$ with $\kappa$ being any cardinal
$\kappa\geq \omega_2$) that every pseudocompact group  of
cardinality at most $2^\cc$  has a pseudocompact group topology with
no infinite compact subsets. It follows from Theorem~\ref{main} that
the result is true in ZFC, even for larger cardinalities.
\begin{corollary}
Let $G$ be a pseudocompact Abelian group of cardinality $|G|\leq 2^{2^\cc}$.
Then $G$ admits a pseudocompact topology with property $\h$ (and
thus  a pseudocompact topology with no infinite compact subsets).
\end{corollary}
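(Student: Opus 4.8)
The plan is to derive the corollary from the criterion of Theorem~\ref{main} in the nontorsion case and from Propositions~\ref{prop:boundedtorsion} and~\ref{prop:h-prod} in the torsion case. In both situations the only real work is arithmetic: one has to check that, once $|G|\le 2^{2^\cc}$, every cardinal invariant attached to $G$ by the structure theory of Section~4 automatically satisfies property~\eqref{*}.

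First I would record the following fact: \emph{every cardinal $\alpha$ with $\cc\le\alpha\le 2^{2^\cc}$ satisfies property~\eqref{*}.} If $\cc\le\alpha\le 2^\cc$, take $\kappa=\cc$; since $\cc^\omega=\cc$, one has $\kappa^\omega\le\alpha\le 2^\kappa$. If $2^\cc<\alpha\le 2^{2^\cc}$, take $\kappa=2^\cc$; since $(2^\cc)^\omega=2^\cc$, one again has $\kappa^\omega\le\alpha\le 2^\kappa$. I would also use that $m$ is monotone and $m(\omega)=\cc$ (a $\gd$-dense subset of a compact group of weight $\ge\omega$ projects onto a $\gd$-dense, hence onto the whole, subset of a metrizable quotient of weight $\omega$), so that $m(\beta)\ge\cc$ for every infinite $\beta$, together with the bound $m(\beta)\le(\log\beta)^\omega$ quoted in the introduction.

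Next I would assume $G$ is infinite (the finite case being trivial, since $\T$ is divisible) and nontorsion, and decompose it as in Theorem~\ref{estrpseu}:
\[ G=\Bigl(\bigoplus_{p^k\in\P_1^\uparrow}\bigoplus_{\gamma(p^k)}\Z(p^k)\Bigr)\oplus G_0,\qquad m(\omega_d(G))\le r_0(G)\le|G_0|\le 2^{\omega_d(G)}. \]
Since $\omega_d(G)$ is infinite, $m(\omega_d(G))\ge\cc$, so $\cc\le r_0(G)\le|G_0|\le|G|\le 2^{2^\cc}$. By the observation above $r_0(G)$ satisfies~\eqref{*}; moreover the witnessing $\kappa$ may be chosen to be $\cc$ when $r_0(G)\le 2^\cc$ and $2^\cc$ otherwise, and in both cases $m(|G_0|)\le(\log|G_0|)^\omega\le(2^\cc)^\omega=2^\cc\le 2^\kappa$, so the supplementary requirement on $\kappa$ in Theorem~\ref{main} is met. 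Finally every dominant rank satisfies $\cc\le r_0(G)<\gamma(p^k)\le|G|\le 2^{2^\cc}$ and hence satisfies~\eqref{*} as well. Theorem~\ref{main} then produces a pseudocompact group topology on $G$ with property~$\h$; since a pseudocompact topology is totally bounded, Lemma~\ref{nocompact} shows it has no infinite compact subsets, which gives the parenthetical claim.

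It remains to treat torsion $G$. Such a group is of bounded order, so $G=\bigoplus_{i=1}^{k}\bigoplus_{\lambda_i}\Z(q_i^{m_i})$ for finitely many prime powers $q_i^{m_i}$. Every infinite $\lambda_i$ satisfies $\cc\le\lambda_i\le|G|\le 2^{2^\cc}$ — the lower bound is a standard feature of pseudocompact groups, obtained by projecting $G$, inside its compact completion, onto the compact factors that carry the homogeneous ranks $\lambda_i$ — and hence satisfies~\eqref{*}; the finite summands get the discrete topology. By Proposition~\ref{prop:boundedtorsion} each infinite summand then admits a pseudocompact topology with property~$\h$, and, finite products of pseudocompact groups being pseudocompact, Proposition~\ref{prop:h-prod} assembles a pseudocompact topology on $G$ with property~$\h$; Lemma~\ref{nocompact} again yields the conclusion. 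I expect the main (and essentially only) delicate point of the whole argument to be this verification that a homogeneous rank $\lambda_i$ of a bounded pseudocompact group cannot be infinite and below $\cc$; everything else is bookkeeping with the cardinal function $m$ and with the hypotheses of Theorem~\ref{main}.
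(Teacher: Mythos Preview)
Your argument is correct and follows essentially the same route as the paper: reduce to checking that the relevant cardinals lie in $[\cc,2^{2^\cc}]$, observe that every such cardinal satisfies property~\eqref{*} with witness $\kappa\in\{\cc,2^\cc\}$, and then invoke Theorem~\ref{main}. The paper's proof is terser---it does not separate the torsion case explicitly and simply remarks that $r_0(G)<\cc$ forces $G$ to be bounded, leaving Theorem~\ref{main} (whose proof already feeds bounded summands into Proposition~\ref{prop:boundedtorsion}) to absorb that situation---whereas you spell out the bounded case directly via Propositions~\ref{prop:boundedtorsion} and~\ref{prop:h-prod}. Your justification that an infinite homogeneous rank $\lambda_i$ of a bounded pseudocompact group is $\ge\cc$ is a bit informal; the cleaner argument (used by the paper in the proof of Theorem~\ref{maingch}) is that successive multiplication by primes produces pseudocompact groups of cardinality equal to each $\lambda_i$, so these ranks are admissible, hence $\ge\cc$.
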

\begin{proof}
Since  a pseudocompact group with $  r_0(G)<\cc$ is
   a bounded  group
it will suffice to check that  every cardinal $\alpha$ with
$\alpha\leq 2^{2^{\cc}}$ satisfies property \eqref{*}. Theorem
\ref{main} will then be applied. We consider the following two
cases:

\emph{Case 1:  $\cc\leq \alpha \leq 2^\cc$}. In this case we put
$\kappa =\cc$.

\emph{Case 2:  $\alpha>2^\cc$}. Choose $\kappa =2^\cc$ for this
case.

Observe that in both cases $|m(|G|)|\leq 2^\kappa$ and hence that
all hypothesis of Theorem \ref{main} are fulfilled.
 \end{proof}
 By van Douwen's theorem \cite{douw80},
   a strong limit admissible cardinal  must have uncountable cofinality.
Under  mild set-theoretic  assumptions this implies that admissible
cardinals must have property \eqref{*}. It suffices, for instance,
to assume the \emph{Singular Cardinal Hypothesis} SCH.


\begin{theorem}[Theorem 3.5 of \cite{comfremus93} and  Lemma
3.4 of \cite{dikrshak98}] \label{th:SCH} If SCH is assumed, then
every admissible cardinal has property \eqref{*}.
\end{theorem}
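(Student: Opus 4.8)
The statement to prove is that, under SCH, every admissible cardinal satisfies property~\eqref{*}, i.e., for every admissible $\alpha$ there is a cardinal $\kappa$ with $\kappa^\omega\leq\alpha\leq 2^\kappa$. The plan is to split into the regular and singular cases, and in the singular case to invoke van Douwen's obstruction together with SCH.

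First I would dispose of the case where $\alpha^\omega=\alpha$. Here we simply take $\kappa=\alpha$: then $\kappa^\omega=\alpha\leq 2^\alpha=2^\kappa$, and \eqref{*} holds trivially. In particular this covers every cardinal of uncountable cofinality under SCH, because SCH implies that $\alpha^\omega=\alpha$ whenever $\cf(\alpha)>\omega$ (for $\alpha\leq 2^\omega$ this is immediate since $\alpha^\omega\leq(2^\omega)^\omega=2^\omega$; for $\alpha>2^\omega$ of uncountable cofinality, $\alpha^\omega=\alpha$ follows from SCH by the standard computation of cardinal exponentiation — one reduces $\beta^\omega$ for $\beta<\alpha$ inductively and uses $\cf(\alpha)>\omega$ to conclude $\alpha^\omega=\sup_{\beta<\alpha}\beta^\omega\cdot\alpha=\alpha$). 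So the only remaining case is $\alpha$ with $\cf(\alpha)=\omega$ and $\alpha^\omega>\alpha$.

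Next I would handle the singular countable-cofinality case by first observing that such an $\alpha$, being admissible, cannot be a strong limit cardinal: this is precisely van Douwen's theorem \cite{douw80}, which says a strong limit admissible cardinal has uncountable cofinality. Hence there is some cardinal $\mu<\alpha$ with $2^\mu\geq\alpha$; enlarging $\mu$ if necessary we may assume $\mu\geq\omega$. Choose $\kappa=\mu$. Then $\alpha\leq 2^\kappa$ is immediate. For the other inequality $\kappa^\omega\leq\alpha$: since $\kappa=\mu<\alpha$, SCH gives $\mu^\omega\leq\mu^+\cdot 2^\omega\leq\alpha$ when $\cf(\mu)>\omega$, while if $\cf(\mu)=\omega$ then under SCH $\mu^\omega=\mu^+$, and $\mu^+\leq\alpha$ since $\mu<\alpha$. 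In all sub-cases $\kappa^\omega=\mu^\omega\leq\alpha$, so \eqref{*} holds with this $\kappa$.

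The main obstacle is purely bookkeeping in the cardinal arithmetic: one must be careful, when $\mu<\alpha$ is singular of countable cofinality, that $\mu^\omega$ does not jump past $\alpha$ — this is where SCH is genuinely used, since without it $\mu^\omega$ could be as large as $2^\mu$, which may well exceed $\alpha$. Under SCH the singular-cofinality case collapses to $\mu^\omega=\mu^+\leq\alpha$, and the regular case to $\mu^\omega=\mu$, so the argument goes through. Since the paper attributes the statement to \cite{comfremus93,dikrshak98}, I would in practice just cite those references for the arithmetic and present the decomposition above as the structure of the argument.
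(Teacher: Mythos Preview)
The paper does not actually give its own proof of this theorem: it is stated with attribution to \cite{comfremus93} and \cite{dikrshak98} and used as a black box. So there is nothing to compare against directly; your outline is essentially the standard argument behind those references (van Douwen's obstruction plus SCH cardinal arithmetic), and the overall structure---reduce to the countable-cofinality case, use that admissible cardinals of countable cofinality are not strong limits, then pick $\kappa=\mu<\alpha$ with $2^\mu\geq\alpha$---is correct.

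There is, however, a small but genuine slip in your arithmetic. Your parenthetical claim that ``for $\alpha\leq 2^\omega$ this is immediate since $\alpha^\omega\leq(2^\omega)^\omega=2^\omega$'' does \emph{not} give $\alpha^\omega=\alpha$ when $\alpha<2^\omega$; it gives $\alpha^\omega=2^\omega>\alpha$. Likewise, in the countable-cofinality case your bound $\mu^\omega\leq\mu^+\cdot 2^\omega\leq\alpha$ silently uses $2^\omega\leq\alpha$, and your claim ``if $\cf(\mu)=\omega$ then $\mu^\omega=\mu^+$'' fails when $\mu\leq 2^\omega$ (one gets $\mu^\omega=2^\omega$ instead). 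All of these gaps are repaired by a single observation you never state: every infinite admissible cardinal satisfies $\alpha\geq\cc$ (an infinite pseudocompact group surjects onto an infinite compact metrizable group, hence has cardinality at least $\cc$; this is also in \cite{douw80}). Once you add that remark, the case $\alpha<\cc$ disappears, in the uncountable-cofinality case you genuinely have $\alpha\geq\cc$ so $\alpha^\omega=\alpha$ under SCH, and in the countable-cofinality case $\alpha>\cc$ (since $\cf(\cc)>\omega$) so $2^\omega<\alpha$ and your bounds go through. Insert that one line and the argument is complete.
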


 Combining  Theorem~\ref{estrpseu} and Theorem~\ref{main},  it turns out that, under SCH,
   every pseudocompact group admits a
pseudocompact group topology with property $\h$.
\begin{theorem}[SCH]\label{maingch}
Every  pseudocompact Abelian group $G$ admits a pseudocompact group topology with property
  $\h$.
\end{theorem}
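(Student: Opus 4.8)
The plan is to turn this into a bookkeeping exercise: the whole statement follows by feeding the structural decomposition of Theorem~\ref{estrpseu} into the sufficient condition of Theorem~\ref{main}, using Theorem~\ref{th:SCH} to supply property~\eqref{*}. So the task reduces to checking that, under SCH, every pseudocompact $G$ satisfies the hypotheses of Theorem~\ref{main}. (If $G$ is torsion it is bounded, hence $G\cong\bigoplus_{i=1}^{k}\Z(p_i^{n_i})^{(\gamma_i)}\oplus F$ with $F$ finite; the infinite $\gamma_i$ are then exactly the dominant ranks of $G$, and once we know they have property~\eqref{*} the topology is produced by Propositions~\ref{prop:boundedtorsion} and \ref{prop:h-prod} exactly as in the last paragraph of the proof of Theorem~\ref{main}. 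Concretely, fixing $\kappa_i$ with $\kappa_i^{\omega}\le\gamma_i\le 2^{\kappa_i}$ and taking $\sigma=2^{\kappa_i}$, $\alpha=\gamma_i$ in Proposition~\ref{prop:boundedtorsion} works, since $\alpha^{\omega}\le 2^{\kappa_i}=\sigma$ and $m(\sigma)\le(\log 2^{\kappa_i})^{\omega}\le\kappa_i^{\omega}\le\alpha$.)

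Assume now $G$ is not torsion. By Theorem~\ref{estrpseu} we may write
\[ G=\Bigl(\bigoplus_{p^k\in\P_1^\uparrow}\bigoplus_{\gamma(p^k)}\Z(p^k)\Bigr)\oplus G_0, \qquad m(\omega_d(G))\le r_0(G)\le|G_0|\le 2^{\omega_d(G)}, \]
where the $\gamma(p_1^{n_1}),\dots,\gamma(p_k^{n_k})$ are the dominant ranks of $G$. By Theorem~\ref{main} it is enough to verify that each $\gamma(p_i^{n_i})$ has property~\eqref{*} and that $r_0(G)$ has property~\eqref{*} \emph{with a witnessing cardinal $\kappa$ for which $m(|G_0|)\le 2^{\kappa}$}. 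The dominant ranks and $r_0(G)$ are admissible cardinals (this is part of the structure theory of pseudocompact groups; for $r_0(G)$ it is the statement recalled in the Remark after Theorem~\ref{estrpseu}), so Theorem~\ref{th:SCH} gives property~\eqref{*} for all of them. The only point that remains is to produce the single cardinal $\kappa$ doing double duty; I would take
\[ \kappa=\max\bigl\{\,m(\omega_d(G)),\ \log(r_0(G))\,\bigr\}. \]

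To see that this $\kappa$ works, use the SCH identity $m(\alpha)=(\log(\alpha))^{\omega}$, which makes both $m(\omega_d(G))$ and $m(r_0(G))=(\log(r_0(G)))^{\omega}$ fixed by the operation $(\cdot)^{\omega}$; hence $\kappa^{\omega}=\max\{m(\omega_d(G)),m(r_0(G))\}$, and this is $\le r_0(G)$ because $m(\omega_d(G))\le r_0(G)$ by the displayed inequality while $m(r_0(G))\le r_0(G)$ by admissibility of $r_0(G)$ (Theorem~\ref{th:SCH}, via the equivalence of property~\eqref{*} with $(m(\alpha))^{\omega}\le\alpha$). Also $2^{\kappa}\ge 2^{\log(r_0(G))}\ge r_0(G)$, so $\kappa$ witnesses property~\eqref{*} for $r_0(G)$. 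Finally, monotonicity of $m$, the bound $|G_0|\le 2^{\omega_d(G)}$ and the ZFC estimate $m(2^{\mu})\le(\log 2^{\mu})^{\omega}\le\mu^{\omega}\le 2^{m(\mu)}$ yield
\[ m(|G_0|)\le m\bigl(2^{\omega_d(G)}\bigr)\le 2^{m(\omega_d(G))}\le 2^{\kappa}. \]
Thus all hypotheses of Theorem~\ref{main} hold, and $G$ carries a pseudocompact group topology with property~$\h$; combined with the torsion case this proves the theorem.

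The step I expect to be the only delicate one is exactly this choice of $\kappa$: Theorem~\ref{main} demands one cardinal that simultaneously witnesses property~\eqref{*} for $r_0(G)$ \emph{and} dominates $m(|G_0|)$, and it is the structural inequality $m(\omega_d(G))\le r_0(G)$ — rather than just $r_0(G)\le 2^{\omega_d(G)}$ — together with the collapse $m(\alpha)=(\log(\alpha))^{\omega}$ provided by SCH, that makes such a $\kappa$ exist. Everything else in the argument is a direct invocation of Theorems~\ref{estrpseu}, \ref{main}, \ref{th:SCH} and Propositions~\ref{prop:boundedtorsion}, \ref{prop:h-prod}.
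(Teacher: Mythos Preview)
Your proof is correct and follows essentially the same architecture as the paper's: feed Theorem~\ref{estrpseu} into Theorem~\ref{main}, using Theorem~\ref{th:SCH} to supply property~\eqref{*} under SCH. The only notable difference is cosmetic: where the paper splits into two cases (taking $\kappa=\log(w_d(G_0))$ when $r_0(G)\le(w_d(G_0))^\omega$ and $\kappa=w_d(G_0)$ otherwise), your single choice $\kappa=\max\{m(\omega_d(G)),\log(r_0(G))\}$ handles both at once---and the paper does spell out the short argument (successively multiplying by the primes $p_i$) that the dominant ranks are admissible, which you cite but do not reproduce.
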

\begin{proof}
 Let $\gamma(p_1^{n_1})\geq\cdots\geq \gamma(p_k^{n_k})$ be
 the dominant ranks of $G$. Then $|G|=\gamma(p_1^{n_1})$ and,
$  \gamma(p_1^{n_1})$ is admissible. Since  we can assume that
$n_i<n_j$ when $j>i$ and $p_i=p_j$, $p_1 G$ will be  a pseudocompact
group of cardinality $|p_1 G|=\gamma(p_2^{n_2})$.  Proceeding in the
same way we obtain that the dominant ranks are  admissible
cardinals. By Theorem \ref{th:SCH} all these cardinals must satisfy
property~\eqref{*}.
 Theorem \ref{estrpseu} shows,  on the other hand, that
 the cardinal $r_0(G)$ is also admissible and, actually:
\[
 m(w_d(G_0))\leq r_0(G_0)=r_0(G)\leq |G_0|\leq 2^{w_d(G_0)}
\]
In order to apply
Theorem~\ref{main} and finish the proof,
we must show that $r_0(G)$ also satisfies property~\eqref{*} for some cardinal $\kappa$ with  $m(|G_0|)\leq 2^{\kappa}$.

We have two possibilities:

\emph{Case 1:  $m(w_d(G_0)) \leq r_0(G)\leq (w_d(G_0))^{\omega}$}. In this case, we put
$\kappa =\log(w_d(G_0))$. Then, bearing in mind that, under SCH, we have $m(\alpha)=(\log(\alpha))^{\omega}$ for every infinite cardinal $\alpha$, we get:
\[
 \kappa^{\omega}=\biggl(\log\bigl(w_d(G_0)\bigr)\biggr)^{\omega}=
m\bigl(w_d(G_0)\bigr)\leq r_0(G)
\]
and
\[
 r_0(G)\leq  \bigl(w_d(G_0)\bigr)^{\omega}\leq \biggl(2^{\log\bigl(w_d(G_0)\bigr)}\biggr)^{\omega}=(2^{\kappa})^{\omega}= 2^{\kappa}.
\]
So property~\eqref{*} is checked. On the other hand,
\[
 m(|G_0|)\leq m\bigl(2^{w_d(G_0)}\bigr)=\biggl(\log\bigl(2^{w_d(G_0)}\bigr)\biggr)^{\omega}\leq \bigl(w_d(G_0)\bigr)^{\omega}\leq 2^{\kappa}
\]

\emph{Case 2:  $\bigl(w_d(G_0)\bigr)^{\omega}\leq r_0(G)\leq 2^{w_d(G_0)}$}. In this case, property~\eqref{*} and condition $m(|G_0|)\leq 2^{\kappa}$ are obviously fulfilled with $\kappa =w_d(G_0)$.
\end{proof}
Theorem \ref{maingch}  relies quite strongly on SCH. It uses the
construction of Theorem \ref{main} made applicable to all admissible
cardinals by  Theorem \ref{th:SCH}. We do not know whether SCH is
essential for Theorem \ref{maingch}, i.e., whether the theorem is
true for pseudocompact groups  whose cardinal does not satisfy
property~\eqref{*}.

Indeed, admissible cardinals not satisfying property~\eqref{*} are
hard to find in the literature. The following (consistent) example,
suggested to us by W.W. Comfort and based on a construction due to
Gitik and Shelah, produces  one such cardinal. We refer to Remark
3.14 of the forthcoming paper \cite{comfgotc} for additional remarks
concerning the Gitik-Shelah models. This same paper contains related
results concerning the cardinals $m(\alpha)$ and, more generally,
the density character  of powers of discrete groups in the
$\kappa$-box topology.
\begin{example}\label{gitikshel}
A pseudocompact group $G$  whose cardinality does not satisfy property~\eqref{*}.
\end{example}
\begin{proof}
Gitik and Shelah, \cite{gitikshel},  construct a model where
$m(\aleph_\omega)=\aleph_{\omega+1}$ while
$2^{\aleph_\omega}=(\aleph_{\omega})^\omega=\aleph_{\omega+2}$. This
means that the compact group $\{1,-1\}^{\aleph_\omega}$ has a
$\gd$-dense subgroup $G$ of cardinality $|G|=\aleph_{\omega +1}$.
Let us denote for simplicity $\alpha=\aleph_{\omega +1}$.

Suppose that $\alpha$ satisfies property \eqref{*}. There is then
 a
cardinal $\kappa$ with
\begin{equation}\label{2}\kappa^\omega\leq \alpha \leq 2^\kappa.\end{equation}
Since $\alpha^\omega \geq( \aleph_\omega)^\omega=
\aleph_{\omega+2}>\alpha$, we see that $\kappa^\omega\neq \alpha$.
It follows then from \eqref{2} that $\kappa^\omega \leq
\aleph_{\omega} \leq 2^\kappa$. But then $m(\aleph_\omega)\leq
m(2^\kappa)\leq \kappa^\omega\leq \aleph_\omega$, whereas, by
construction, $m(\aleph_\omega)=\aleph_{\omega+1}$. This
contradiction shows that $\alpha$ does not satisfy property
\eqref{*}.
\end{proof}
 \section{Property $\h$ and the duality of totally bounded Abelian
 groups}
   Pontryagin duality was designed to work in  locally compact Abelian groups
    and usually  works better for complete groups.
This behaviour raised the question (actually our first motivating
Question \ref{Qref}) as to whether all totally bounded reflexive
group should be compact,
 \cite{chasmart08}. We see next  that this is not the case.
 \begin{theorem}
 If a
 pseudocompact Abelian group  contains
  no infinite compact subsets, then it  is Pontryagin reflexive.
 \end{theorem}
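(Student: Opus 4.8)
The plan is to show that a pseudocompact Abelian group $G$ with no infinite compact subsets satisfies the hypotheses of Pontryagin's reflexivity theorem via the following strategy: identify $G^\wedge$ explicitly, show it is discrete, and then recognize $G^{\wedge\wedge}$ as the Bohr compactification of $G$, inside which $G$ is $\gd$-dense. First I would recall that, since $G$ is totally bounded, $G=(G,\cth)$ for a point-separating subgroup $H\subset\ho(G,\T)=(G_d)^\wedge$, and that $G$ is $\gd$-dense in its completion $K=\oo{G}$, a compact group with $w(K)=w(G)$. The key observation is that the absence of infinite compact subsets forces every continuous character of $G$ to be \emph{uniformly} continuous in a strong sense: the compact-open topology on $G^\wedge$ coincides with the topology of pointwise convergence on finite sets, because the only compact subsets of $G$ are finite. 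Hence $G^\wedge$ carries the topology induced from $\T^G$ restricted to finite supports, i.e.\ $G^\wedge$ is a \emph{discrete} group. (Here I would use that a continuous character of $G$ extends uniquely to $K$, so that $G^\wedge\cong K^\wedge$ algebraically, and $K^\wedge$ is discrete because $K$ is compact — but one must still check that the compact-open topology coming from $G$, not $K$, is the discrete one, and this is exactly where ``no infinite compact subsets'' is used.)

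Next, since $G^\wedge$ is discrete, its dual $G^{\wedge\wedge}=(G^\wedge)^\wedge$ is compact. I would argue that $G^{\wedge\wedge}$ is canonically the Bohr compactification $bG$ of $G$: the continuous characters of $G$ are precisely the characters of $bG$, and $bG$ is by definition the compact group with character group $(G^\wedge)_d$. Therefore the evaluation map $\alpha_G\colon G\to G^{\wedge\wedge}=bG$ is, up to the canonical identifications, the standard embedding of $G$ into its Bohr compactification, which is continuous and injective because $H=G^\wedge$ separates points. It remains to see that $\alpha_G$ is a topological embedding and that it is onto. It is an embedding because $G$ carries the totally bounded topology $\cth$ induced by $H=G^\wedge$, which is exactly the subspace topology inherited from $bG$. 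For surjectivity, I would invoke $\gd$-density: $\alpha_G(G)$ is $\gd$-dense in $bG$, and the range of $\alpha_G$ is actually all of $bG$ — here one must show that a $\gd$-dense pseudocompact subgroup of a compact group which is itself reflexive-as-abstract-data fills the whole group, i.e.\ that $\alpha_G$ cannot have proper dense image if $\alpha_G$ is to be open onto its image and $G$ pseudocompact.

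The main obstacle is precisely surjectivity of $\alpha_G$: $G$ is $\gd$-dense but not all of $bG$ (indeed $G$ need not be compact), so $\alpha_G$ is \emph{not} onto $bG$ when $bG$ is the full Bohr compactification — which means the identification $G^{\wedge\wedge}=bG$ must be wrong, and instead $G^{\wedge\wedge}$ is computed with the \emph{compact-open} topology on $G^\wedge$ giving back something smaller. So the correct route is: show $G^\wedge$ with the compact-open topology is discrete (this is the real content, using no infinite compact subsets, via van Douwen's theorem that any countable subset generates a subgroup whose Bohr closure has size $2^\cc$ — hence characters are determined finitely); then $G^{\wedge\wedge}$ is compact; then prove $\alpha_G$ is a continuous injective homomorphism onto a $\gd$-dense subgroup of this compact group; and finally show $\alpha_G$ is \emph{surjective} by observing that $G^{\wedge\wedge}$, being the dual of the discrete group $G^\wedge=\ho(G,\T)|_{\mathrm{cts}}$, has the same cardinality and weight as the completion $K=\oo G$, and that the composite $G\hookrightarrow K$ and $\alpha_G\colon G\to G^{\wedge\wedge}$ induce the same character pairing, so $K\cong G^{\wedge\wedge}$; but that forces $G=K$ (hence $G$ compact) unless one is careful — so in fact the theorem as stated can only hold because reflexivity requires $\alpha_G$ open onto its image together with surjectivity, and the openness already encodes that $G$ sees all of $K$'s $\gd$-structure. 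I would therefore finish by citing the $\gd$-density criterion: $\alpha_G\colon G\to G^{\wedge\wedge}$ is an open continuous injection with $\gd$-dense image in a compact group, and since $G$ is pseudocompact it meets every $\gd$-set, whence (by a Baire-category / $\gd$-density argument on the complement, which is the delicate point) the image is closed, hence all of $G^{\wedge\wedge}$, giving the topological isomorphism $\alpha_G$ and the reflexivity of $G$.
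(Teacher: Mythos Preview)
Your proposal contains a genuine error at the very first step: the claim that $G^\wedge$ is \emph{discrete} is false. You correctly observe that, since $G$ has no infinite compact subsets, the compact-open topology on $G^\wedge$ coincides with the topology of pointwise convergence on $G$. But pointwise convergence on $G$ is precisely the totally bounded topology $\mathcal{T}_{_G}$ on $H$ (the subspace topology inherited from $\T^G$), not the discrete topology. Your parenthetical argument that $G^\wedge\cong K^\wedge$ algebraically and $K^\wedge$ is discrete proves nothing about the topology that $G^\wedge$ actually carries: the compact-open topology computed from $G$ is strictly coarser than the one computed from $K$ whenever $G\subsetneq K$. All the difficulties you run into afterwards (the bidual being compact, the impossible surjectivity of $\alpha_G$, the attempted Baire-category rescue) stem from this mistake; you even detect the contradiction yourself but do not trace it back to its source.

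The paper's argument is short and quite different. One writes $G^\wedge=(H,\mathcal{T}_{_G})$ as above; this is a totally bounded group, not a discrete one. The key input --- which your proposal does not use --- is the Hern\'andez--Macario duality theorem (Theorem~\ref{hernmaca}): $G=(G,\cth)$ is pseudocompact if and only if every countable subgroup of $(H,\mathcal{T}_{_G})$ is $h$-embedded in $(G_d)^\wedge$. Thus pseudocompactness of $G$ forces $(H,\mathcal{T}_{_G})$ to have property~$\h$, and hence (Lemma~\ref{nocompact}) to have no infinite compact subsets. Now the same reasoning applied to $(H,\mathcal{T}_{_G})$ gives $G^{\wedge\wedge}=(H,\mathcal{T}_{_G})^\wedge=(G,\cth)$, and reflexivity follows immediately. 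There is no surjectivity obstacle because the bidual is never compact in this argument.
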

 \begin{proof}
Let $G=(G,\cth)$ be a pseudocompact group with no infinite compact
subsets. The group of continuous characters of $G$ is then precisely
$H$ and since $G$ has no infinite compact subsets, the topology of
this dual group will equal the topology of pointwise convergence on
$G$, therefore $G^\wedge=(H,\mathcal{T}_{_{G}})$ (see in this
connection \cite{racztrig01}). By Theorem \ref{hernmaca},
$(H,\mathcal{T}_{_{G}})$ must be again a totally bounded group with
property $\h$ and hence with no infinite compact subsets, the same
argument as above then shows that $G^{\wedge\wedge}=
\left(H,\mathcal{T}_{_{G}}\right)^{\wedge}=(G,\cth)$ and therefore
that $G$ is reflexive.
\end{proof}
This last theorem combined with Lemma \ref{nocompact} and the
results of Section~5 provides a wide range of examples that answer
negatively  Question \ref{Qref}. This question has also been
answered independently in \cite{ardaetal} where another collection
of examples has been obtained.
\begin{corollary}[SCH]
Every infinite pseudocompact Abelian group $G$
supports a noncompact,  pseudocompact group topology $\cth$ such
that $(G,\cth)$ is reflexive. \end{corollary}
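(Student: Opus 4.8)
The plan is to combine the two headline results of the paper. First I would invoke Theorem~\ref{maingch}: under SCH every pseudocompact Abelian group $G$ admits a pseudocompact group topology $\cth$ with property~$\h$. Then I would apply Lemma~\ref{nocompact} to conclude that $(G,\cth)$, being totally bounded (pseudocompact groups are totally bounded by Comfort--Ross) and having property~$\h$, contains no infinite compact subsets. Finally, the immediately preceding Theorem (``if a pseudocompact Abelian group contains no infinite compact subsets, then it is Pontryagin reflexive'') shows that $(G,\cth)$ is reflexive. So $(G,\cth)$ is a reflexive, pseudocompact group topology on $G$.

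The one remaining point is to ensure $(G,\cth)$ is \emph{not} compact. Here I would observe that an infinite compact group always contains infinite compact subsets --- trivially, the whole group --- whereas $(G,\cth)$ has none. Since $G$ is infinite by hypothesis, $(G,\cth)$ cannot be compact. (Equivalently: a compact group has nontrivial convergent sequences, or simply is itself an infinite compact subset of itself, contradicting Lemma~\ref{nocompact}.) This gives the noncompactness for free, with no extra work.

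The only step requiring any care is the verification that the hypotheses of Theorem~\ref{maingch} are genuinely met --- but that theorem is stated for \emph{every} pseudocompact Abelian group under SCH, with no side conditions, so there is nothing to check. The main ``obstacle'' is therefore purely bookkeeping: confirming that ``property~$\h$'' plus ``totally bounded'' feeds correctly into Lemma~\ref{nocompact}, and that ``no infinite compact subsets'' plus ``pseudocompact'' feeds into the reflexivity theorem. Both implications are exactly as stated in the text, so the corollary follows by simply chaining Theorem~\ref{maingch} $\Rightarrow$ Lemma~\ref{nocompact} $\Rightarrow$ the reflexivity theorem, together with the elementary remark that an infinite group with no infinite compact subsets is not compact. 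No further ideas are needed.
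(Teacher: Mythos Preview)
Your proposal is correct and follows exactly the route the paper indicates: the sentence preceding the corollary explicitly says it follows by combining the reflexivity theorem with Lemma~\ref{nocompact} and the results of Section~5 (i.e., Theorem~\ref{maingch}). Your added remark that an infinite group with no infinite compact subsets cannot be compact supplies the noncompactness, which the paper leaves implicit.
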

\begin{corollary} Every infinite
pseudocompact Abelian group $G$ with $|G|\leq 2^{2^\cc}$ supports a
noncompact,  pseudocompact group topology $\cth$ such that
$(G,\cth)$ is reflexive.
\end{corollary}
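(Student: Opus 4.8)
The plan is simply to assemble the three preceding results. First I would invoke the Corollary following Theorem~\ref{main}: since $G$ is a pseudocompact Abelian group with $|G|\leq 2^{2^\cc}$, it admits a pseudocompact group topology $\cth$ with property~$\h$. (Recall that this Corollary already absorbs the bounded case, via the observation that a pseudocompact group with $r_0(G)<\cc$ is bounded, so no separate argument for torsion or bounded $G$ is needed.) Every pseudocompact group is totally bounded by the Comfort--Ross theorem, so Lemma~\ref{nocompact} applies to $(G,\cth)$ and yields that $(G,\cth)$ has no infinite compact subsets.

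Next I would apply the Theorem stated immediately above this Corollary: a pseudocompact Abelian group containing no infinite compact subsets is Pontryagin reflexive. Applied to $(G,\cth)$, this gives that $(G,\cth)$ is reflexive, which is the substantive half of the statement.

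Finally I would dispatch noncompactness. An infinite compact group is, trivially, an infinite compact subset of itself; since $G$ is infinite and $(G,\cth)$ has no infinite compact subsets, $(G,\cth)$ cannot be compact. There is no real obstacle here: the only point that requires care is that the cited Corollary genuinely equips the \emph{whole} group $G$ (not merely a subgroup or quotient) with the desired topology, and the hypothesis "$G$ infinite" is exactly what rules out the degenerate case. Thus $(G,\cth)$ is a noncompact, pseudocompact, reflexive group topology on $G$, as claimed.
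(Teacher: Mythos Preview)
Your proposal is correct and matches the paper's intended argument: the paper does not even write out a proof for this Corollary, indicating just before it that the reflexivity Theorem ``combined with Lemma~\ref{nocompact} and the results of Section~5'' yields the examples, which is exactly the chain you assemble (the $|G|\leq 2^{2^\cc}$ Corollary for property~$\h$, then Lemma~\ref{nocompact}, then the reflexivity Theorem, with noncompactness immediate from the absence of infinite compact subsets).
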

\subsection*{Acknowledgements}
We heartily thank  M.G. Tkachenko  for sharing with us a preprint
copy of \cite{ardaetal} and D. Dikranjan for his remarks on a
previous version of this paper and for making us aware of Lemma
\ref{estrdikgio}. We are also indebted to  W. W. Comfort and to D.
Dikranjan for their help concerning Example \ref{gitikshel}.
\def\cprime{$'$} \def\cprime{$'$} \def\cprime{$'$} \def\cprime{$'$}
  \def\polhk#1{\setbox0=\hbox{#1}{\ooalign{\hidewidth
  \lower1.5ex\hbox{`}\hidewidth\crcr\unhbox0}}}
  \def\polhk#1{\setbox0=\hbox{#1}{\ooalign{\hidewidth
  \lower1.5ex\hbox{`}\hidewidth\crcr\unhbox0}}} \def\cprime{$'$}

 \end{document}